 \newtheorem{theorem}{Theorem}[section]
 \newtheorem{prop}[theorem]{Proposition}
 \newtheorem{corollary}[theorem]{Corollary}
 \newtheorem{definition}{Definition}
 \newtheorem{lemma}[theorem]{Lemma}
 \title{The Newman phenomenon and Lucas sequence.}
 \author{Alexandre Aksenov}
 \date{}
\newcommand{\F}{\mathbb F_p}
\newcommand{\Fx}{\mathbb F_p^\times}
\newcommand{\R}{\mathbb R}
\newcommand{\N}{\mathbb N}
\newcommand{\Z}{\mathbb Z}
\newcommand{\C}{\mathbb C}
\newcommand{\Q}{\mathbb Q}
\newcommand{\ind}{1} 
\newcommand{\pl}{\oplus}
\newcommand{\Minus}{\frac{\phantom{xxx}}{\phantom{xxx}}\,}
\newcommand{\Tr}{\mathop{\rm Tr}}
\newcommand{\uo}{\bar{1}}
\renewcommand{\t}{\cdot}
\begin{document}
\maketitle

\begin {abstract}
This article gives an  alternative proof of the fact that $N_{\Q(\zeta)/\Q}(1-\zeta)=p$ where $p$ is an odd prime number  and $\zeta$ is a primitive $p$-th root of unity, 
and uses it to prove that $N_{\Q(\zeta)/\Q}(1+\zeta-\zeta^2)=L(p)$ the $p$-th Lucas number. It shows a relation between this result and a generalisation of the Newman phenomenon.
\end{abstract}

\vspace*{0.25cm}\noindent{\small {\bf Keywords}:
Newman phenomenon, M\"obius function, partitions of a set, Lucas numbers, cyclotomic fields.}

\section*{1. Introduction.}
\refstepcounter{section}
In 1969  D.J.Newman proved the following statement:

\begin{theorem}[Moser's conjecture]
Let $(t_n)$ be the Thue-Morse sequence i.e.
$$t_n=(-1)^{\text{\rm the number of '$1$' digits in the binary expansion of $n$}} \in \{1,-1\}. $$

Then all the sums $t_0$, $t_0+t_3$, $t_0+t_3+t_6$ (the sums of $N$ initial terms of the sequence $(t_{3n})$) are strictly positive. 
\end{theorem}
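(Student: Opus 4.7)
The plan is to exploit the self-similar recurrences $t_{2n}=t_n$ and $t_{2n+1}=-t_n$, together with how the multiples of $3$ distribute modulo $4$. Introduce three partial sums indexed by residue mod $3$:
$$A_N=\sum_{n=0}^{N-1}t_{3n},\qquad B_N=\sum_{n=0}^{N-1}t_{3n+1},\qquad C_N=\sum_{n=0}^{N-1}t_{3n+2}.$$
The goal is $A_N>0$ for every $N\ge 1$, but the natural object to induct on is the whole triple, since the recurrences for the individual summands couple the three classes.

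First I would group $A_{4N}$ into blocks of four consecutive terms $t_{12n},t_{12n+3},t_{12n+6},t_{12n+9}$; applying $t_{2k}=t_k$ and $t_{2k+1}=-t_k$ twice rewrites each as $\pm t_{3n+r}$ with $r\in\{0,1,2\}$. This yields a linear recurrence of the shape $A_{4N}=\alpha A_N+\beta B_N+\gamma C_N$ with small integer coefficients, together with analogous identities for $B_{4N}$ and $C_{4N}$. The triple $(A_N,B_N,C_N)$ then evolves under a fixed $3\times 3$ integer matrix every time $N$ quadruples.

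Next I would verify the base cases by direct computation ($A_1=1,A_2=2,A_3=3,A_4=4$) and set up a strong induction on $N$ establishing simultaneously a lower bound $A_N\ge c\,N^{\alpha}$ and upper bounds $|B_N|,|C_N|\le c'\,N^{\alpha}$, with $\alpha<1$ (Newman's original exponent is $\log_4 3$). The coefficient of $A_N$ in the $A_{4N}$-recurrence dominates, so along multiples of $4$ the inequalities reproduce themselves; for a general $N=4M+r$ with $r\in\{0,1,2,3\}$, one absorbs the at most three leftover terms into the constants.

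The main obstacle will be closing the induction for the residues $r\ne 0$, since a single stray $-1$ could in principle turn $A_N$ negative; the work is to find admissible constants $c,c'$ making the three inequalities self-propagating under the matrix action, which amounts to a fixed-point computation on the growth exponents of the iteration. If this fine tuning proved too delicate, a backup would be analytic: extract the subseries $\sum_{n\ge 0}t_{3n}x^n$ from the product $\prod_{k\ge 0}(1-x^{2^k})$ by a roots-of-unity filter and analyse its behaviour near $x=1$.
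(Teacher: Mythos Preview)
The paper does not give its own proof of this theorem: it is quoted in the introduction as Newman's 1969 result and serves only as historical background for the generalisations (Coquet, Goldstein--Kelly--Speer) that the paper actually builds on. So there is no in-paper argument to compare against.

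On its own merits, your plan is the standard one and is essentially Newman's proof recast in the matrix language of \cite{GKS}. Your block-of-four decomposition is correct and gives
\[
A_{4N}=2A_N-B_N-C_N,\qquad B_{4N}=-A_N+2B_N-C_N,\qquad C_{4N}=-A_N-B_N+2C_N,
\]
i.e.\ the circulant matrix $3I-J$ with eigenvalues $0,3,3$. The ``main obstacle'' you anticipate is much milder than you suggest, because the $0$-eigenvector $(1,1,1)$ corresponds to $A_N+B_N+C_N=\sum_{m<3N}t_m$, and for the Thue--Morse sequence this full partial sum is bounded in absolute value by $1$. Substituting $B_N+C_N=-A_N+O(1)$ into the first line collapses the system to the scalar recursion
\[
A_{4N}=3A_N+O(1),
\]
from which both strict positivity for all $N$ and the growth exponent $\log_4 3$ follow by a short induction, with the residues $N\equiv 1,2,3\pmod 4$ handled by the single-term corrections you already mention. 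So your sketch is sound; the only thing missing is this observation, which removes the need for the delicate simultaneous tuning of constants $c,c'$ that worried you.
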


He also gave an estimation for the rate of growth of these sums, more precisely he proved that

$$K_1 N^\frac{\log 3}{\log 4} < \sum_
																{\begin{array}{c}
																	n<N\\
																	3|n	
																\end{array}} 
																							t_{n} < K_2 N^\frac{\log 3}{\log 4}$$
for two explicit constants $K_1$ and $K_2$.

The article \cite{Coquet} of J.Coquet  provides a more complete description of the structure of these sums via the formula
$$
\sum_
{\begin{array}{c}
n<N\\
3|n	
\end{array}} 
							t_{n} = F(\{\log_4 N\}) N^\frac{\log 3}{\log 4}+ \epsilon(N)
$$
where $\{x\}$ denotes the fractional part of $x\in\R$, $|\epsilon(N)|\leqslant 1$ therefore $\epsilon(N)$ is an error term, and $F$ is a continuous and nowhere differentiable function. This article gives also the values of the the maximum and the minimum of $F$, that are the best possible bounds $K_2$ and $K_1$ for $N$ large enough. 

Coquet's result has been generalised to arbitrary odd primes $p$ in \cite{GKS}:
\begin{theorem}[Theorem 5.1 from \cite{GKS}]
Let $p$ be an odd prime number. Denote 
$$\mathbf S_q(N)=(S_{q,i}(N)=\sum_{
\begin{array}{c}
0\leqslant n<N\\
n\equiv i\text{\rm\ mod }p 
\end{array}
} t(n)\ )_{i=0,1,\ldots,p-1}\in\C^p, $$ 
denote $\mathbf T$ the $p\times p$ matrix
$$\mathbf T=\left(
\begin{array}{ccccc}
0&0  &\ldots           &0     &1     \\
1&0  &                 &      &0     \\
 &1  &0\phantom{x}     &      &0     \\
 &(0)&\diagdown        &\ddots&\vdots\\
 &   &                 &1     &0
\end{array}\right)
$$
and 
$$\mathbf M=\prod_{m=0}^{s-1}(\mathbf I-\mathbf T^{2^m}) $$
where $s$ is the order of the element $2$ in the group $\Fx$.
Then the asymptotic behaviour of the sequence of vectors $\mathbf S_q(N)$ is described by
\begin{equation}
\label{GKS}
\mathbf S_p(N)=N^{\alpha_p}\tilde{\mathbf G}\left(\frac{\log N}{rs\log 2}\right)+\tilde{\mathbf E}(N)
\end{equation}
where:\begin{itemize}
\item $\tilde{\mathbf G}(x)=\left(\tilde G_i(x)\right)_{i=0,1,\dots,p-1}$ is a column vector whose components are continuous nowhere derivable periodic functions with period $1$;
\item $\alpha_p=\frac{\log \lambda_1}{rs\log 2}$ where $\lambda_1$ is the spectral radius of $\mathbf M$ and $r\in\{1,2,4\}$ is the smallest integer such that the $r$-th power of the biggest (in modulus) eigenvalue of $\mathbf M$ is real positive;
\item $\tilde{\mathbf E}(N)=O(N^{\beta_p})$ \\
where $\beta_p$ depends on the second biggest modulus $\lambda_2$ of an eigenvalue of  $\mathbf M$ by
$$\beta_p=\beta_p(\lambda_2)=\left\{\begin{array}{ll}
\frac{\log\lambda_2}{s\log 2}&\mbox{\em if $\lambda_2>1$},\\
0&\mbox{\em otherwise,}
\end{array}\right. $$
\end{itemize}
\end{theorem}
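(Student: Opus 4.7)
The plan is to follow the pattern established by Coquet for $p=3$, lifting it to a vector recursion indexed by residues modulo $p$. My starting point is the substitutive identity $t_{2n}=t_n$, $t_{2n+1}=-t_n$. Splitting the range $0\leqslant n<2N$ in $S_{p,i}(2N)$ by parity and regrouping by the residue of $m=\lfloor n/2\rfloor$ yields the coordinate recursion
\[
S_{p,i}(2N)=S_{p,2^{-1}i}(N)-S_{p,2^{-1}(i-1)}(N),
\]
which in vector form reads $\mathbf{S}_p(2N)=(\mathbf{I}-\mathbf{T})\mathbf{P}\,\mathbf{S}_p(N)$, with $\mathbf{P}$ the permutation matrix implementing "multiply indices by $2^{-1}\bmod p$"; the odd case introduces only a bounded correction. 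A short calculation shows $\mathbf{P}(\mathbf{I}-\mathbf{T})=(\mathbf{I}-\mathbf{T}^2)\mathbf{P}$, and since $\mathbf{P}^s=\mathbf{I}$, iterating $s$ times collapses the permutations and gives the contracted recursion
\[
\mathbf{S}_p(2^sN)=\mathbf{M}\,\mathbf{S}_p(N)+\mathbf{B}(N),\qquad \|\mathbf{B}(N)\|=O(1),
\]
with exactly the matrix $\mathbf{M}=\prod_{m=0}^{s-1}(\mathbf{I}-\mathbf{T}^{2^m})$ of the statement.

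Next I would iterate $k\simeq \log N/(s\log 2)$ times: writing $N$ in base $2^s$ gives
\[
\mathbf{S}_p(N)=\mathbf{M}^k\mathbf{S}_p(N_0)+\sum_{j=0}^{k-1}\mathbf{M}^j\mathbf{B}_j,
\]
with $N_0$ bounded and each $\mathbf{B}_j$ of bounded norm depending on one base-$2^s$ digit of $N$. The dominant asymptotic is then extracted from the Jordan decomposition of $\mathbf{M}$: the spectral radius $\lambda_1$ produces a term of size $\lambda_1^k=N^{\alpha_p}$, and the integer $r\in\{1,2,4\}$ must be chosen as the smallest integer making $\lambda_1^r$ real positive, failing which the leading coefficient would rotate rather than settle into a periodic function. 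The admissibility $r\in\{1,2,4\}$ itself reflects the rationality of the characteristic polynomial of $\mathbf{M}$ together with the concrete form of the Galois action on its roots. Subdominant eigenvalues of modulus $\lambda_2$ contribute $O(N^{\log\lambda_2/(s\log 2)})$ if $\lambda_2>1$ and $O(1)$ otherwise, yielding the stated error $\tilde{\mathbf{E}}(N)=O(N^{\beta_p})$.

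The fluctuating factor $\tilde{\mathbf{G}}$ I would construct as the uniform limit in $k$ of the rescaled iterates parametrised by $x=\{\log N/(rs\log 2)\}$. Self-similarity of the contracted recursion under $N\mapsto 2^{rs}N$ makes the limit well-defined, $1$-periodic and continuous; nowhere differentiability follows, exactly as in Coquet \cite{Coquet} for the case $p=3$, from the combination of self-similarity with a scaling factor $2^{rs}>1$ strictly larger than the dominant contraction rate $\lambda_1^r$.

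The main obstacle, as I see it, will be the spectral analysis of $\mathbf{M}$: one has to show that $\lambda_1$ is a simple eigenvalue so that $\tilde{\mathbf{G}}$ is a well-defined vector-valued function rather than a section of a line bundle, to identify $r\in\{1,2,4\}$ without a tedious case-by-case analysis, and to rule out Jordan blocks at the subdominant eigenvalues (which would introduce an unwanted logarithmic factor into the error term). A secondary delicate point is proving that the limit defining $\tilde{\mathbf{G}}$ is uniform on $[0,1)$, which is what guarantees continuity rather than merely a bounded measurable limit.
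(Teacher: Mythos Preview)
The paper does not itself prove this theorem: it is quoted from \cite{GKS} in the introduction as background. Your outline is a faithful sketch of the matrix-recursion proof one would expect \cite{GKS} to contain, and as such it is sound.

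What the paper does offer, in Section~2, is a genuinely different route to the same asymptotics, and it is worth comparing. Rather than iterating a vector recursion and then spectrally decomposing $\mathbf{M}$, the paper diagonalises at the outset via characters: writing the indicator of $p\mid n$ as $\frac1p\sum_j\zeta^{jn}$ reduces each $S_{p,i}(N)$ to a linear combination of the scalar sums $\sum_{n<N}\zeta^{jn}t_n$, and each sequence $\tau_n=\zeta^{jn}t_n$ is $2^s$-multiplicative with associated constant $d(2^s)=\prod_{m=0}^{s-1}(1-\zeta^{j\cdot 2^m})$. The fractal function $\psi$ is then constructed explicitly from the base-$2^s$ digit expansion, and its continuity and nowhere-differentiability are proved directly (Propositions~2.2 and~2.3). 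This dissolves the obstacles you list at the end: $\mathbf{M}$ is a polynomial in the circulant shift $\mathbf{T}$, hence itself circulant and diagonalised by the discrete Fourier transform, with eigenvalues precisely the constants $d_j(2^s)$; there are therefore no Jordan blocks, simplicity of $\lambda_1$ is never an issue, and the uniform-limit question for $\tilde{\mathbf{G}}$ does not arise because the scalar functions are built directly rather than as limits of rescaled iterates. Your approach, by contrast, keeps the vector structure intact and makes the role of $\lambda_1$ as spectral radius more transparent, at the price of the spectral analysis you flag; observing that $\mathbf{M}$ is circulant would already remove most of that price in your own argument.
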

 
They did not calculate the best possible values for bounds $K_1$ and $K_2$ since the complexity of this calculation increases very fast with $p$ (see also \cite{Grabner} for a complete study of the case $p=5$). Remark that in fact, $r=2$ never happens as will be proved in Section $5$.

In this article we are going to further generalize the method of Coquet  to a large class of $b$-multiplicative sequences with arbitrary $b>1$ and challenge another problem explored 
in \cite{GKS}: give an algebraic description for the exponent $\alpha_p$ of $N$ in the formulae analogous to (\ref{GKS}). 

Recall that a sequence $f(n)$ of real or complex numbers is called $b$-multiplicative if it satisfies
$f(0)=1$ and  $$f(ab^k+c)=f(ab^k)f(c) $$
for all $a,k\in\N$ and $c<b^k$.

We will also denote by $\overline{c_lc_{l-1}\dots c_0}_{(b)}$, or simply $\overline{c_lc_{l-1}\dots c_0}$ when the base of the numeration system is clear, the nonnegative integer represented by the sequence $c_lc_{l-1}\dots c_0$ in base $b$, that is:
\[\overline{c_lc_{l-1}\dots c_0}_{(b)} = \sum_{i=0}^l c_ib^i. \]

\section*{2. $p$-rarefaction in multiplicative sequences.}
\refstepcounter{section}
Let $b>1$ be an integer and $\tau_n$ be a unit-circle-valued $b$-multiplicative sequence such that $\tau_{cb^k}=\tau_c$ for any $c,k\in\N$. As consequence $\tau$ can be defined  as follows: if $\overline{c_lc_{l-1}\dots c_0}$ is the $b$-ary expansion of an integer $n$ then
$$\tau_n=\prod_{i=0}^l \tau_{c_i} $$
and $\tau_0=1$; each of the complex numbers $\tau_1,\tau_2,\dots,\tau_{b-1}$ is of modulus one. We will also suppose that $\tau_n$ is not identically $1$.

Under these hypotheses the Theorem $2$ of \cite{Morgenbesser} can be applied, and it states that
$$\sum_{n=0}^{N-1}\tau_n = o(N). $$
This bound can be made more precise.

Let us put an additional notation : for each $c\in\{0,\dots,b\}$ we will denote $d(c)=\sum\limits_{i=0}^{c-1} \tau_i$.
Then for each positive integer $N$ represented in base $b$ by $\overline{c_lc_{l-1}\dots c_0}$ we get:

\begin{equation}
\label{Spsi}
\begin{array}{cl}
\sum\limits_{n=0}^{N-1} \tau_n&= \sum\limits_{n=0}^{\overline{c_l00\dots0}-1}\tau_n+ \sum\limits_{n=\overline{c_l00\dots0}}^{\overline{c_lc_{l-1}0\dots0}-1}\tau_n+\ldots+
\sum\limits_{n=\overline{c_lc_l-1\dots c_10}}^{N-1}\tau_n\\[2mm]
&=\left(\sum\limits_{c=0}^{c_l-1}\tau_c\right)\left(\sum\limits_{c=0}^{b-1}\tau_c\right)^l+\tau_{c_l}\left(\sum\limits_{c=0}^{c_{l-1}-1}\tau_c\right)\left(\sum\limits_{c=0}^{b-1}\tau_c\right)^{l-1}
+\ldots+\prod_{k=1}^l\tau_{c_k}\cdot\left(\sum\limits_{c=0}^{c_0-1}\tau_c\right)\\
&=\sum\limits_{i=0}^l\prod\limits_{k=i+1}^l\tau_{c_k}\cdot d(c_i) \cdot d(b)^i.
\end{array}
\end{equation}

Now we can make a distinction based on the absolute value of $d(b)=\sum_{c=0}^{b-1}\tau_c$. 

If $|\sum_{c=0}^{b-1}\tau_c|<1$, the sum $\sum_{n=0}^{N-1}\tau_n$ is bounded.

If $|\sum_{c=0}^{b-1}\tau_c|=1$, we get $\sum_{n=0}^{N-1}\tau_n=O(\log N)$.

If $|\sum_{c=0}^{b-1}\tau_c|>1$, we are going to prove that for any fixed value of $\log(\sum_{c=0}^{b-1}\tau_c)$, there exists a continuous and nowhere differentiable
function $F:[0,1]\to\C$ such that
\begin{equation}
\label{defF}
\sum_{n=0}^{N-1}\tau(n)=F\left(\log_b N\right)N^\frac{\scriptstyle\log(\sum_{c=0}^{b-1}\tau_c)}{\scriptstyle\log b} \text{ for all $N\in\N$.}
\end{equation}

Remark that the final expression given in \eqref{Spsi} can make sense for any (positive) real number.

\begin{definition} Given a real number $x>0$ and its base $b$ expansion of the form $\overline{c_lc_{l-1}\ldots c_0.c_{-1}c_{-2}\ldots}$ , define $\psi(x)=\psi_{\tau,b}(x)$ as
\begin{equation}
\label{defPsi}
\psi(x)=\sum_{i=-\infty}^l\prod_{k=i+1}^l\tau_{c_k}\cdot d(c_i) \cdot d(b)^i. 
\end{equation}
\end{definition}

This definition needs to be justified.

\begin{prop}
Suppose that $x$ is a number of the form $x=b^{-m}X$ with $m,X\in\N$. Then the expression in \eqref{defPsi} takes the same value for both expansions of $x$ in base $b$.
\end{prop}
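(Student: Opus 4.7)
A positive real $x = b^{-m}X$ has exactly two base-$b$ expansions: a terminating one in which $c_{-m}\neq 0$ and $c_i=0$ for $i<-m$ (so, since $d(0)=0$, the series in \eqref{defPsi} truncates to a finite sum) and a non-terminating one $(c'_i)$ with $c'_i=c_i$ for $i>-m$, $c'_{-m}=c_{-m}-1$, and $c'_i=b-1$ for $i<-m$. The first step is to notice that every term of \eqref{defPsi} for $i>-m$ is identical in the two expansions, so only the tails at indices $i\leqslant -m$ need to be compared.

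For the second step I would factor out the common product $A=\prod_{k=-m+1}^{l}\tau_{c_k}$ from the remaining terms, using the telescoping identity $d(c_{-m})-d(c_{-m}-1)=\tau_{c_{-m}-1}$ and rewriting $\prod_{k=i+1}^{l}\tau_{c'_k} = \tau_{b-1}^{-m-1-i}\,\tau_{c_{-m}-1}\,A$ for $i\leqslant -m-1$. The difference between the non-terminating and the terminating evaluation then collapses to
\[ A\,\tau_{c_{-m}-1}\left(-d(b)^{-m}+d(b-1)\sum_{i=-\infty}^{-m-1}\tau_{b-1}^{-m-1-i}\,d(b)^{i}\right). \]

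The third and crucial step is to recognize the remaining infinite sum as a geometric series in the ratio $\tau_{b-1}/d(b)$. Since the proposition is only meaningful in the regime where $\psi$ is defined, namely $|d(b)|>1$, this ratio has modulus less than $1$ and the series converges to $d(b)^{-m}/(d(b)-\tau_{b-1})$. The key observation, which follows directly from the definition of $d$, is the identity $d(b)-\tau_{b-1}=d(b-1)$; substituting it produces an exact cancellation of the two terms inside the parentheses, so that the difference is $0$.

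The main obstacle is conceptual rather than computational: one must notice that the convention making $d(c)$ stop at $c-1$ rather than $c$ is exactly what produces the identity $d(b)-\tau_{b-1}=d(b-1)$, and that the hypothesis $|d(b)|>1$ which makes $\psi$ well defined is precisely the one that makes the geometric tail sum to the compensating quantity. Once these two alignments are seen, the cancellation is automatic; a minor bookkeeping care is required at the edge case $c_{-m}=0$, handled by absorbing trailing zeros into the choice of $m$.
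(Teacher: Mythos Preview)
Your argument is correct and follows essentially the same route as the paper's own proof: cancel the common terms for $i>-m$, use $d(c_{-m})-d(c_{-m}-1)=\tau_{c_{-m}-1}$, sum the resulting geometric tail in the ratio $\tau_{b-1}/d(b)$, and close with the identity $d(b)-\tau_{b-1}=d(b-1)$. Your write-up is in fact a bit more careful than the paper's (you make the factorisation through $A$ and the edge case $c_{-m}=0$ explicit), but the underlying computation is identical.
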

\begin{proof}
Suppose that the expansion of $x$ of finite length is $\overline{c_lc_{l-1}\ldots c_0.c_{-1}c_{-2}\ldots c_{-m}}$, then the one of infinite \\[2pt]length is
$\overline{c_lc_{l-1}\ldots c_0.c_{-1}c_{-2}\ldots (c_{-m}-1)(b-1)(b-1)\ldots} $ and the proposition is equivalent to the identity
$$\begin{array}{rl}
\sum\limits_{i=-m}^{l} \prod_{k>i}\tau_{c_k}\cdot d(c_i) d(b)^i=&\sum\limits_{i=-m+1}^l \prod_{k>i}\tau_{c_k}\cdot d(c_i) d(b)^i 
+ \prod\limits_{k>-m}\tau_{c_k}\cdot d(c_{-m}-1) d(b)^{-m}\\
&+ d(b-1)\left(\sum\limits_{i=-\infty}^{-m-1} \prod\limits_{k>i}\tau_{c_k}\cdot d(b)^i \right).
\end{array}
$$
After cancelling the terms corresponding to $i>-m$ from the sums, we get:
$$\prod_{k>-m}\tau_{c_k}\cdot d(c_{-m}) d(b)^{-m}= \prod\limits_{k>-m}\tau_{c_k}\cdot d(c_{-m}-1) d(b)^{-m}
+ d(b-1)\left(\sum\limits_{i=-\infty}^{-m-1} \prod\limits_{k>i}\tau_{c_k}\cdot d(b)^i \right).$$ 
Subtracting the first term of the right-hand side from the left-hand side gives
$$\prod_{k\geqslant -m}\tau_{c_k}\cdot d(c_{-m}) d(b)^{-m}= d(b-1)\left(\sum\limits_{i=-\infty}^{-m-1} \prod\limits_{k>i}\tau_{c_k}\cdot d(b)^i \right).$$
After  cancellation in products we get the equivalent identity
$$d(b)^{-m}= d(b-1)\left(\sum\limits_{i=-\infty}^{-m-1} \prod\limits_{k=i+1}^{-m-1}\tau(b-1)\cdot d(b)^i \right)$$
which can be reduced by changing  indexes to
$$1=d(b-1)\sum\limits_{i=-\infty}^{-1}\tau(b-1)^{-i-1}d(b)^i. $$
The right-hand side of this expression is equal to
$$d(b-1)\sum_{i=1}^{+\infty}\tau(b-1)^{i-1}d(b)^{-i}=\frac{d(b-1)}{d(b)}\sum_{i=1}^{+\infty}\left(\frac{\tau(b-1)}{d(b)}\right)^{i-1}
=\left(1-\frac{\tau(b-1)}{d(b)}\right)\sum_{i=1}^{+\infty}\left(\frac{\tau(b-1)}{d(b)}\right)^{i-1}=1. $$

\end{proof}

By the same method we can get the two following propositions.

\begin{prop}
$\psi$ is continuous.
\end{prop}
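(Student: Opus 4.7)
The plan is to exploit the geometric decay of the tail of the series \eqref{defPsi} together with the preceding proposition, which lets me switch between the two base-$b$ expansions of a $b$-adic rational at will.

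First, I would record the absolute convergence bound. Each summand in \eqref{defPsi} has modulus at most $b\cdot|d(b)|^i$, because $|\tau_{c_k}|=1$ and $|d(c_i)|\leqslant b$; since we are in the case $|d(b)|>1$, the tail satisfies
\[
\Big|\sum_{i\leqslant -M-1}\prod_{k=i+1}^l\tau_{c_k}\cdot d(c_i)\cdot d(b)^i\Big|\leqslant \frac{b\,|d(b)|^{-M}}{|d(b)|-1}=:C_M,
\]
with $C_M\to 0$ as $M\to\infty$. Fix $x>0$ and $\varepsilon>0$, and choose $M$ so that $2C_M<\varepsilon$.

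Second, I would reduce continuity at $x$ to a matching-of-digits argument. If $x$ is not of the form $b^{-m}X$ with $m,X\in\N$, its base-$b$ expansion is unique, and for $\delta$ smaller than the distance from $x$ to the nearest $b$-adic rational with denominator $b^M$ (and smaller than $1$) any $y$ with $|y-x|<\delta$ has the same digits as $x$ in positions $\geqslant -M$. The difference $\psi(y)-\psi(x)$ is then a difference of two series whose terms at positions $\geqslant -M$ coincide, so only the tails survive and the triangle inequality gives $|\psi(y)-\psi(x)|\leqslant 2C_M<\varepsilon$.

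The remaining case, and the main obstacle, is $x=b^{-m}X$, where two expansions exist and neither alone is compatible with arbitrary perturbations. Here I would treat $y\to x^+$ and $y\to x^-$ separately: for $y>x$ sufficiently close, the expansion of $y$ agrees with the terminating expansion $\overline{c_l\ldots c_0.c_{-1}\ldots c_{-m}000\ldots}$ of $x$ through position $-M$; for $y<x$ close enough, the expansion of $y$ agrees instead with the non-terminating expansion $\overline{c_l\ldots c_0.c_{-1}\ldots(c_{-m}-1)(b-1)(b-1)\ldots}$ of $x$ through position $-M$. In each case I apply the tail bound above, using for $\psi(x)$ the very expansion that matches $y$; this is legitimate because the previous proposition certifies that both expansions evaluate to the same $\psi(x)$.

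Thus in every case $|\psi(y)-\psi(x)|\leqslant 2C_M<\varepsilon$ for $|y-x|$ small enough, proving continuity. The only delicate point is verifying the digit-matching at the $b$-adic rationals; the arithmetic of carries makes this immediate once one notices that a perturbation of size $<b^{-M-1}$ can only alter digits at positions $\leqslant -M-1$ in the appropriately chosen expansion.
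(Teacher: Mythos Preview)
Your argument is correct and follows essentially the same route as the paper: both proofs choose the base-$b$ expansion of $x$ according to the direction from which $y$ (or the sequence $x_n$) approaches, then bound $|\psi(y)-\psi(x)|$ by a geometric tail of order $|d(b)|^{-M}$. Your write-up is slightly more explicit in separating the $b$-adic-rational case and in invoking the preceding proposition to justify switching expansions, but the underlying idea is identical.
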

\begin{proof}
Consider a sequence of real numbers $x_1,x_2,\ldots,x_n,\ldots$ converging to a real $x>0$. Suppose that $x_n>x$ for all $n$ or that $x_n<x$ for all $n$. We are going to choose a base $b$ expansion of $x$ accordingly: if $x_n>x$, take $\overline{c_lc_{l-1}\ldots c_0.c_{-1}c_{-2}\ldots}$ to be the expansion of $x$ not ending with $b-1$'s, and if $x_n<x$ take $\overline{c_lc_{l-1}\ldots c_0.c_{-1}c_{-2}\ldots}$ to be the one not ending with zeroes.

In both cases one can find for any $m>0$ a rank $N$ such that for any $n>N$, $x$ and $x_n$  have $m$ identical digits after the radix point. Therefore,
$$
\begin{array}{cl}
|\psi(x)-\psi(x_n)|&=\left|\sum\limits_{i=-\infty}^{m-1}\prod_{k>i}\tau_{c_k}\cdot d(c_i)d(b)^i -
\sum\limits_{i=-\infty}^{m-1}\prod_{k>i}\tau(\bar{c}_k)\cdot d(\bar c_i)d(b)^i\right|\\[5mm]
&\leqslant 2\max_{c\in\{0,\dots,b-1\}} \sum_{i<-m}|d(b)|^i \xrightarrow[m\to\infty]{}0
\end{array}
$$
where $\bar c_i$ denote the digits of $x_n$. Therefore, the sequence $\psi(x_n)$ converges to $\psi(x)$.
\end{proof}

\begin{prop}
$\psi$ is nowhere differentiable.
\end{prop}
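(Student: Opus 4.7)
The plan is to show that at every $x>0$ one can find sequences $x_m\leqslant x\leqslant x_m'$ with $x_m'-x_m=b^{-m}\to 0$ such that the chord slope $(\psi(x_m')-\psi(x_m))/(x_m'-x_m)$ has modulus tending to $+\infty$. Since differentiability of $\psi$ at $x$ would force these chord slopes to converge to $\psi'(x)$, non-differentiability will then follow.

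I would construct these sequences explicitly from a base-$b$ expansion of $x$. Pick an expansion $\overline{c_lc_{l-1}\ldots c_0.c_{-1}c_{-2}\ldots}$ of $x$ containing infinitely many digits $c_{-m}<b-1$ (the terminating representation if $x$ has one, the unique expansion otherwise). For each such index $m$ set $x_m=\sum_{k=-m}^{l}c_kb^k$ and $x_m'=x_m+b^{-m}$: since $c_{-m}<b-1$ no carry occurs, so $x_m$ and $x_m'$ share the digits of $x$ at all positions $>-m$ and have zero digits at positions below $-m$, and moreover $x_m\leqslant x\leqslant x_m'$. Inserting both expansions into \eqref{defPsi}, the terms with $i<-m$ vanish (because $d(0)=0$), those with $i>-m$ agree between $x_m$ and $x_m'$ (they involve neither the differing digit at position $-m$ nor any lower position), and only the $i=-m$ term contributes to the difference, yielding
$$\psi(x_m')-\psi(x_m)=\Bigl(\prod_{k>-m}\tau_{c_k}\Bigr)\bigl[d(c_{-m}+1)-d(c_{-m})\bigr]\,d(b)^{-m}=\Bigl(\prod_{k\geqslant -m}\tau_{c_k}\Bigr)\,d(b)^{-m}.$$

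Each $|\tau_c|=1$, so this difference has modulus $|d(b)|^{-m}$ and the chord slope has modulus exactly $(b/|d(b)|)^m$. Under the standing hypothesis $|d(b)|>1$, while the triangle inequality applied to $d(b)=\sum_{c=0}^{b-1}\tau_c$ gives $|d(b)|\leqslant b$ with equality iff all $\tau_c$ share the same argument; combined with $\tau_0=1$ this would force every $\tau_c=1$ and hence $\tau\equiv 1$, contrary to hypothesis. So $1<|d(b)|<b$ strictly, whence $(b/|d(b)|)^m\to+\infty$, forbidding differentiability of $\psi$ at $x$.

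The main difficulty I anticipate is purely organisational --- producing infinitely many carry-free indices $m$ regardless of $x$, and selecting the correct representation when $x$ admits two. The genuine content reduces to the single identity displayed above, which follows by the same digit-by-digit cancellation already used in the proofs of the two preceding propositions.
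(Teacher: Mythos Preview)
Your argument is correct and essentially identical to the paper's: you choose the expansion of $x$ not ending in $(b-1)$'s, truncate at positions $m$ with $c_{-m}<b-1$, bump the last digit by one, and obtain chord slopes of modulus $(b/|d(b)|)^{m}\to\infty$. The only addition over the paper is that you explicitly justify the strict inequality $|d(b)|<b$ via the triangle inequality and the hypothesis $\tau\not\equiv 1$, which the paper uses implicitly.
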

\begin{proof}
We are going to to use the following characterizaion of the derivative: if $f$ is a real function derivable at $x$ and $f'(x)=c$ then for every $\epsilon>0$ there exists $\delta>0$ such that for every $x_1,x_2\in\R$ verifying $x-\delta<x_1\leqslant x\leqslant x_2<x+\delta$ and $x_1<x_2$ we get
$$c-\epsilon<\frac{f(x_2)-f(x_1)}{x_2-x_1}<c+\epsilon. $$

Let $x$ be a positive real number represented in base $b$ by the sequence of digits $\overline{c_lc_{l-1}\ldots c_0.c_{-1}c_{-2}\ldots}$ that does not end by $b-1$'s. Let $J_n$ be the strictly increasing sequence of indexes such that $c_{-J_n}<b-1$. For each $n$ denote
$$x_n=\overline{c_lc_{l-1}\ldots c_0.c_{-1}c_{-2}\ldots c_{J_n}} $$
and
$$y_n=\overline{c_lc_{l-1}\ldots c_0.c_{-1}c_{-2}\ldots (c_{J_n}+1)}. $$
Then $y_n>x\geqslant x_n$ and $\lim_{n\to\infty}(y_n-x_n)=0$ and
$$\psi(y_n)-\psi(x_n)=\prod_{k\geqslant J_n}\tau_{c_k}d(b)^{-J_n}. $$
Therefore
$$\left|\frac{\psi(y_n)-\psi(x_n)}{y_n-x_n}\right|=\left(\frac{d(b)}{b}\right)^{-J_n}\xrightarrow[n\to\infty]{} +\infty. $$
This proves that the sequence $\frac{\psi(y_n)-\psi(x_n)}{y_n-x_n}$ cannot converge to any finite complex number, and, as consequence, the function $\psi$ cannot be differentiable at $x$.
\end{proof}

By its definition, $\psi(bx)=d(b)\psi(x)$. Now we can define the function $F$ satisfying the formula \eqref{defF}. For any $x>0$ denote
\begin{equation}
\label{defF2}
F\left(\log_b x\right)=\psi(x) x^{- \frac{\scriptstyle\log d(b)}{\scriptstyle\log b}}. 
\end{equation}
This is a continuous function of a real argument. In the formula \eqref{defF2}, $\log d(b)$ can be taken to be any fixed value, and the other logarithms are those of positive real arguments, so they are supposed to be real. $F$ is also periodic of period $1$ which finishes the proof of \eqref{defF}. Indeed, if $y=\log_b x$, then
$$F(y+1)=F\left(\log_b(bx)\right)=d(b)\psi(x)\cdot b^{- \frac{\scriptstyle\log d(b)}{\scriptstyle\log b}}x^{- \frac{\scriptstyle\log d(b)}{\scriptstyle\log b}}=\psi(x)x^{- \frac{\scriptstyle\log d(b)}{\scriptstyle\log b}}=F(y). $$


The problem of Newman phenomenon can be challenged using this result. Suppose that $t_n$ is a $b$-multiplicative sequence satifying the above hypotheses, $p$ is a prime number coprime to $b$ and $\zeta$ is a $p$-th root of unity. Then the $p$-rarefied sum of $t_n$ can be written as:
$$\sum_{n<N}\ind_{p|n} t_n
= \sum_{n<N}\frac1p\left(1+\zeta^n+\zeta^{2n}+\ldots+\zeta^{(p-1)n}\right)t_n
=\frac1p\left(\sum_{n<N}t_n+\sum_{n<N}\sum_{j\in\F^\times}\zeta^{jn}t_n\right). $$
If we define $s$ to be the order of $b$ in the group $\F^\times$, we obtain that $\zeta^{jn}t_n$ are $b^s$-multiplicative sequences. Applied to the particular case of the Thue-Morse sequence as $t_n$, our method is the direct generalization of the method used in \cite{Coquet} and \cite{Grabner}.

The following chapters will be concentrated on one example of study of the constant $d(b^s)$ associated to sequences of the form $\tau_n=\zeta^{jn}t_n$ where $t_n$ is a precise sequence of \ $+1$s and $-1$s. We hope that the tools developed for this example will be useful for similar  questions. But before going to particular cases, let's add a general remark about the rarefied $+1/-1$ sequences.

Consider, as before, a $b$-multiplicative sequence $t_n$ satisying the conditions stated at the beginning of this chapter and composed only of numbers $1$ and $-1$, let $p$ be a prime number coprime to $b$, and let $s,\zeta$ and the sequence $\tau$ be as above, let $<b>$ be the subgroup of $\F^\times$ generated by $b$. Then the constant $d(b^s)$ associated to $\tau$ is
$$d(b^s)=\sum\limits_{n=0}^{b^s-1}t_n\zeta^n $$ 
and it can alo be written, by the $b$-multiplicativity of $t_n$ as
$$d(b^s)=\prod_{k=0}^{s-1}\left(\sum_{c=0}^{b-1}t_c\zeta^{b^kc}\right)=\prod_{j\in<b>}\left(\sum_{c=0}^{b-1}t_c\zeta^{jc}\right). $$

It follows from this formula that $d(b^s)$ is an element of a number field of degree at most $\frac{p-1}{s}$. Indeed, in the group $\F^\times$ there are $\frac{p-1}{s}$ classes modulo $<b>$ and to each class $[j]$ we can associate the number
$$\xi^{[j]}=\sum\limits_{\textstyle n=0}^{\textstyle b^s-1}t_n\zeta^{jn}.  $$
All the symetric polynomials in the numers $\xi^{[j]}$ are integers which proves our claim. 

Given this result, it is natural to study the numbers 
$$\xi^{[i]}=\prod_{j\in i\F^\times k} (\sum_{c=0}^{b-1}t_c\zeta^{jc})$$
(regardless on whether $s=\frac{p-1}{k}$) instead of numbers $d(b^s)$; this will be the perspective of the following sections.

Let us suggest a notation for the $b$-multiplicative sequences composed of $1$s and $-1$s by the sequence of '$+$' and '$-$' signs corresponding to the $b$ initial terms of the sequence. For instance, with this notation, the Thue-Morse sequence is the sequence $"+ -"$.

\section*{3. Poker combinatorics.}
\refstepcounter{section}
This section  can be seen as a box of useful tools. Its main result is the following:

\begin{lemma}
\label{delta1d}
Let $p$ be a prime number and $0\leqslant n<p$ an integer. Denote $A_0(n,p)$ the number of subsets of $\Fx$ of $n$ elements that sum up to $0$ (modulo $p$) and $A_1(n,p)$ the number of those subsets that sup up to $1$. Then
$$A_0(n,p)-A_1(n,p)=(-1)^n. $$ 
\end{lemma}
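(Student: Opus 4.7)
The plan is to evaluate the polynomial
\[ P(X) \;=\; \prod_{k=1}^{p-1}\bigl(1 + X\,\zeta^k\bigr) \;\in\; \Z[\zeta][X], \]
where $\zeta$ is a primitive $p$-th root of unity, in two different ways and to compare the coefficients of $X^n$. Expanding the product by choosing either $1$ or $X\zeta^k$ from each factor groups the contributions by the subset $S\subseteq\Fx$ of chosen indices, giving
\[ P(X) \;=\; \sum_{S\subseteq\Fx} X^{|S|}\,\zeta^{\sum S} \;=\; \sum_{n=0}^{p-1} X^n\sum_{j=0}^{p-1} A_j(n,p)\,\zeta^{j}, \]
where I extend the notation $A_j(n,p)$ to every $j\in\F$ in the obvious way (so $A_0$ and $A_1$ recover the quantities in the statement).

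Next I would compute $P(X)$ in closed form. Factoring $1+X\zeta^k=\zeta^k(X+\zeta^{-k})$, using that $\prod_{k=1}^{p-1}\zeta^k=\zeta^{p(p-1)/2}=1$ for odd $p$, and noting that $-k$ runs through $\Fx$ as $k$ does, one has $P(X)=\prod_{k=1}^{p-1}(X+\zeta^k)$. Starting from the cyclotomic identity $\prod_{k=1}^{p-1}(X-\zeta^k)=1+X+\dots+X^{p-1}$ and substituting $X\mapsto -X$ (using $(-1)^{p-1}=1$) I expect to obtain
\[ P(X) \;=\; 1 - X + X^2 - \dots + X^{p-1}. \]
Comparing coefficients of $X^n$ in the two expressions yields the key algebraic identity in $\Z[\zeta]$:
\[ \sum_{j=0}^{p-1} A_j(n,p)\,\zeta^{j} \;=\; (-1)^n \qquad (0\leqslant n<p). \]

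To turn this into the integer equality asserted by the lemma, I would invoke a multiplicative symmetry. For each $r\in\Fx$ the map $S\mapsto rS$ is a bijection on the $n$-element subsets of $\Fx$ and multiplies the sum of elements by $r$; hence $A_{rj}(n,p)=A_j(n,p)$ for all $j,r$, so $A_1(n,p)=A_2(n,p)=\dots=A_{p-1}(n,p)$. Calling this common value $a$ and using $1+\zeta+\dots+\zeta^{p-1}=0$, the key identity collapses to $A_0(n,p)-a=(-1)^n$, which is exactly the claim.

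I do not anticipate a serious obstacle. The computation of $P(X)$ is elementary once one chooses the right algebraic manipulation, and the symmetry step is immediate. The only mild pitfall is sign bookkeeping in the closed-form evaluation, which is why I favour the factorisation $1+X\zeta^k=\zeta^k(X+\zeta^{-k})$ over substitutions of the form $X\mapsto -1/X$ that would introduce denominators.
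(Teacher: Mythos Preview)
Your proof is correct and considerably more direct than the paper's. The paper takes a genuinely different route: it first establishes a closed formula for the analogous count with ordered sequences allowing repetitions (Proposition~\ref{E}), then uses M\"obius inversion over the partition lattice $\Pi_n$ to show that $A_0(n,p)-A_1(n,p)$ is independent of the prime $p>n$ (Proposition~\ref{deltaInv}), and finally closes an induction on $n$ by invoking Bertrand's postulate together with the complementation symmetry $A_i(n,p)=A_i(p-1-n,p)$ to pass to a smaller $n$. Your generating-function argument reaches the result in a few lines and is essentially the slick proof the paper itself alludes to when it notes that the two corollaries ``have, in fact, a much simpler proof.'' What the paper's longer approach buys is the machinery of ``poker combinatorics''---the counts $E_x^{k_1,\dots,k_n}$ and the inclusion--exclusion over partitions---which is reused verbatim in Section~6 for the sequence ``$++-$'', where the relevant product $\prod_j(1+T^j-T^{2j})$ does not collapse via a cyclotomic identity. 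One minor bookkeeping point: your closed-form evaluation of $P(X)$ uses both $\prod_{k=1}^{p-1}\zeta^k=1$ and $(-1)^{p-1}=1$, i.e.\ it assumes $p$ is odd; the case $p=2$ is trivial but should be mentioned separately.
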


Let's begin the proof with an obvious observation: if we define in analogous way the numbers $A_2(n,p)$, $A_3(n,p)$, \dots, $A_{p-1}(n,p)$ , they are all equal to $A_1(n,p)$ since multiplying a set that sums to $1$ by a constant residue $c\in\Fx$ gives a set that sum up to $c$, and this correspondence is a bijection.

Let us deal with a simpler version of the lemma that includes order of elements and repetitions, that is, counts the following objects:

\begin{definition}
Denote $E^{k_1,k_2,\ldots,k_n}_x(n,p)$ (where $x\in\F$ and $k_1,k_2,\ldots,k_n\in\Fx$) the number of sequences $(x_1,x_2,\ldots,x_n)$ of elements of $\Fx$ such that
$$\sum_{i=1}^n k_i x_i=x. $$
We will also denote $E^{1,1,\ldots,1}_x(n,p)$ by $E_x(n,p)$.
\end{definition}
Then we get the following

\begin{prop}
\label{E}
If $n$ is even,
$$E^{k_1,k_2,\ldots,k_n}_0(n,p)=\frac{(p-1)^n+p-1}{p}\quad \text{\rm  and }\quad E^{k_1,k_2,\ldots,k_n}_1(n,p)=\frac{(p-1)^n-1}{p};$$
\hspace{0.5cm}if $n$ is odd,
$$E^{k_1,k_2,\ldots,k_n}_0(n,p)=\frac{(p-1)^n-p+1}{p}\quad \text{\rm  and }\quad E^{k_1,k_2,\ldots,k_n}_1(n,p)=\frac{(p-1)^n+1}{p}.$$
In both cases,
$$E^{k_1,k_2,\ldots,k_n}_0(n,p)-E^{k_1,k_2,\ldots,k_n}_1(n,p)=(-1)^n. $$ 

\end{prop}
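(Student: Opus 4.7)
The plan is to prove all four explicit formulas in a single stroke by discrete Fourier analysis on $\F$. Let $\zeta$ be a primitive $p$-th root of unity, so that $\frac{1}{p}\sum_{a\in\F}\zeta^{a(m-x)}$ is the indicator of $\{m\equiv x\bmod p\}$. Applying this with $m=\sum_i k_ix_i$ and summing over $(x_1,\ldots,x_n)\in(\Fx)^n$ gives
$$E^{k_1,k_2,\ldots,k_n}_x(n,p)=\frac{1}{p}\sum_{a\in\F}\zeta^{-ax}\prod_{i=1}^n\Bigl(\sum_{y\in\Fx}\zeta^{ak_iy}\Bigr).$$

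Because each $k_i$ lies in $\Fx$, the product $ak_i$ vanishes iff $a=0$, so the inner factor $\sum_{y\in\Fx}\zeta^{ak_iy}$ equals $p-1$ when $a=0$ and $-1$ otherwise (using the vanishing $\sum_{y=0}^{p-1}\zeta^{ak_iy}=0$ for $a\neq 0$). In particular the product no longer depends on $k_1,\ldots,k_n$, which simultaneously yields the independence on the $k_i$ and collapses the identity to
$$E^{k_1,k_2,\ldots,k_n}_x(n,p)=\frac{(p-1)^n}{p}+\frac{(-1)^n}{p}\sum_{a\in\Fx}\zeta^{-ax}.$$
The remaining character sum $\sum_{a\in\Fx}\zeta^{-ax}$ equals $p-1$ when $x=0$ and $-1$ when $x\in\Fx$ (once more from $\sum_{a\in\F}\zeta^{-ax}=0$), giving
$$E_0(n,p)=\frac{(p-1)^n+(-1)^n(p-1)}{p},\qquad E_1(n,p)=\frac{(p-1)^n-(-1)^n}{p}.$$
Splitting on the parity of $n$ produces exactly the four expressions stated.

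The closing equality then follows by direct subtraction:
$$E_0(n,p)-E_1(n,p)=\frac{(-1)^n\bigl((p-1)+1\bigr)}{p}=(-1)^n,$$
valid for both parities. The argument is essentially mechanical and I see no real obstacle; the only step to handle with care is the separation of the $a=0$ contribution from that of $a\in\Fx$ in the Gauss-type sum, since it is the $(p-1)$ versus $-1$ dichotomy of the inner factor that produces all the arithmetic in the final formulas. An alternative route, if one wishes to avoid characters and only targets the last identity, is to establish the recursion $E_0(n+1,p)-E_1(n+1,p)=-\bigl(E_0(n,p)-E_1(n,p)\bigr)$ from the convolution $E_x(n+1,p)=\sum_{y\in\Fx}E_{x-y}(n,p)$ and the symmetry $E_y(n,p)=E_1(n,p)$ for $y\in\Fx$, then induct starting from $E_0(0,p)=1$, $E_1(0,p)=0$.
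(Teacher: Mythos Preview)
Your argument is correct. The character-sum computation is clean: the orthogonality relation on $\F$ turns the count into a product of Gauss-type sums, each inner factor collapses to $p-1$ or $-1$ according to whether $a=0$ or not, and the final split on $x=0$ versus $x\in\Fx$ yields the stated expressions. Nothing is missing.

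The paper takes a different route. It proceeds by induction on $n$, using the recursions
\[
E_0^{k_1,\ldots,k_n}(n,p)=(p-1)\,E_1^{k_1,\ldots,k_{n-1}}(n-1,p),
\qquad
E_1^{k_1,\ldots,k_n}(n,p)=E_0^{k_1,\ldots,k_{n-1}}(n-1,p)+(p-2)\,E_1^{k_1,\ldots,k_{n-1}}(n-1,p),
\]
which come from conditioning on $x_n$ and using the symmetry $E_c(n-1,p)=E_1(n-1,p)$ for $c\in\Fx$. This is precisely the alternative you sketch in your closing paragraph. So your secondary suggestion matches the paper's actual proof, while your primary argument is genuinely different.

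What each approach buys: your Fourier computation is non-recursive, delivers all four formulas and the independence on $k_1,\ldots,k_n$ in one line, and makes transparent why the answer depends only on whether $x$ is zero or not. The paper's induction is more elementary in that it avoids roots of unity altogether, staying purely combinatorial; given that the surrounding section is building up to results about $\prod_j(1-\zeta^j)$ via counting arguments rather than via the factorisation of $X^p-1$, keeping the auxiliary Proposition character-free is stylistically consistent there, even if slightly longer.
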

\begin{proof}
\it By induction on n.\rm\  For $n=0$ or $n=1$ the result is trivial. For bigger $n$ we always get:
$$E_0^{k_1,k_2,\ldots,k_n}(n,p)=(n-1)E_1^{k_1,k_2,\ldots,k_{n-1}}(n-1,p) $$
and
$$E_1^{k_1,k_2,\ldots,k_n}(n,p)=E_0^{k_1,k_2,\ldots,k_{n-1}}\textit{}(n-1)+(p-2)E_1^{k_1,k_2,\ldots,k_{n-1}}(n-1,p) $$
since the sequences of length $n$ summing up (with coefficients) to $x$  are exactly expansions of sequences of length $n-1$ summing up (again, with coefficients) to another residue than $x$ and the correspondence is a bijection. Injecting formulas for $n-1$ concludes the induction.
\end{proof}

Now we are going to prove Lemma \ref{delta1d} for small $n$. If $n=0$ or $n=1$, the lemma is clear. For $n=2$, there is one more sequence $(x,y)\in\Fx{^2}$ that sums up to $0$, but that counts the sequences of the form $(x,x)$ which should be thrown out. Since $p$ is prime, these sequences contribute once for every non-zero residue modulo $p$, and throwing them out increases the zero's advantage to $2$. Now, we have to identify each couple $(x,y)$ and $(y,x)$ to be the same, so we get the difference $1$ back, establishing the lemma $1$ for $n=2$.

For $n=3$, counting all the sequences $(x,y,z)\in\Fx$ gives a difference $E_0-E_1=-1 $. The sequences $(x,x,z)$ contribute one time more often to the sum equal to $0$, so throwing them out adds $-1$ to the total difference. The same thing applies to sequences of the form $(x,y,y)$ and $(x,y,x)$. Done that, we get an intermediate  difference of $-4$, but the triples of the form $(x,x,x)$ have been thrown out $3$ times, which is equivalent to saying they count $-2$ times. So they should be "reinjected" with the coefficient $2$. As $p$ is prime and  bigger than $3$, the redundant triples contribute once for each non-zero residue; therefore we accumulate the difference of $-4-2=-6$. We have then to idetify permutations, that is to divide score by $6$ which gives the final result $-1$.

Now, here is the explicit calculation for the case $n=4$:

$$\begin{array}{ll}
1 & \text{\rm  (corresponds to $E_0(4,p)-E_1(4,p)$)}\\
+6 & 	\text{\rm  (for throwing out \begin{tabular}{ccc}  $(x,x,y,z)$,& $(x,y,x,z)$,&$(x,y,z,x)$,\\ $(x,y,y,z)$,&$(x,y,z,y)$,&$(x,y,z,z)$ \end{tabular})} \\[2mm]
+2\times 4 & \text{\rm  (for reinjecting $(x,x,x,y),(x,x,y,x), (x,y,x,x)$ and $(x,y,y,y)$)}\\[2mm]
+1\times 3 & \text{\rm  (for reinjecting $(x,x,y,y),(x,y,x,y)$ and $(x,y,y,x)$)}\\[2mm]
+6\times 1 & \text{\rm  (for throwing out $(x,x,x,x)$)}\\[2mm]
=24 &
\end {array}
$$
which is $4!$, therefore the lemma $1$ is proved for $n=4$.

For a general $n$ we can calculate the difference between the number of sequences that give $0$ and the number of those giving $1$ by assigning to all sequences in $\Fx{^n}$ an intermediate coefficient equal to one, then by reducing it by one for each couple of equal terms, then increasing by $2$ for each triple of equal terms, and so on, proceding by successive adjustments of coefficients, each step corresonding to a "poker combination" of $n$ cards. If after adding the contributions of all the steps and the initial $(-1)^n$, we get $(-1)^nn!$, Lemma \ref{delta1d} is valid for $n$ independently from $p$ provided that $p>n$ is prime.

In  what follows we are going to formalize the concept of poker combination using the notions exposed in \cite{Rota}. Call a \it partition \rm of the set $\{1,2,\dots,n\}$ a choise of pairwize disjoint nonempty subsets $B_1,B_2,\dots, B_c$ of $\{1,2,\dots,n\}$ of nonincreasing sizes $|B_i|$, and such that $B_1\cup B_2 \cup\dots\cup B_c=\{1,2,\dots,n\} $. The set $\Pi_n$ of all partitions of  $\{1,2,\dots,n\}$ is partially ordered by reverse refinement: for each two partitions $\tau$ and $\pi$, we say that $\tau\geqslant\pi$ if each block of $\pi$ is included in a block of $\tau$. We define the M\"obius function $\mu(\hat 0,x)$ on $\Pi_n$ (the definition and notation are due to \cite{Rota}) recursively by:\\
\quad if $x=\{\{1\},\{2\},\cdots,\{n\}\}=\hat 0$, then $\mu(\hat 0,x)=1$;\\
\quad if $x$ is bigger than $\hat 0$, then 
$$\mu(\hat 0,x)=-\sum_
{\begin{array}{c}
y\in\Pi_n\\
y<x
\end{array}
}\mu(\hat 0,y). $$

Figure $1$  illustrates  the structure of $\Pi_n$ in the case $n=7$. It brings together all the partitions of $\{1,2,\dots,7\}$ of the same type that is with the same sequence $(B_1,B_2,\cdots,B_n)$ of sizes of blocks and indicates in the square brackets the values of the M\"obius function associated to each type: 

\begin{figure}[h]
\begin{center}
\begin{picture}(290,310)
\put(150,0){$\hat 0_{<+1>}$}
\put(140,50){$\rm Pair_{<-1>}$}
\put(70,100){$2\rm p_{<+1>}$}
\put(210,100){$3\text{ of a kind}_{<+2>}$}
\put(0,150){$3\rm p_{<-1>}$}
\put(75,150){$4\text{ of a kind}_{<-6>}$}
\put(175,150){$\text{Full House}_{<-2>}$}
\put(50,200){$4\pl 2_{<+6>}$}
\put(140,200){$5\text{ of a kind}_{<+24>}$}
\put(245,200){$3\pl 3_{<+4>}$}
\put(0,250){$3\pl 2\pl 2_{<+2>}$}
\put(70,300){$4\pl 3_{<-12>}$}
\put(140,300){$6\text{ of a kind}_{<-120>}$}
\put(245,275){$5\pl 2_{<-24>}$}
\put(145,350){$7\text{ of a kind}_{<+720>}$}

\put(152,11){\line(0,1){38}}

\put(150,58){\line(-5,3){65}}
\put(180,58){\line(5,3){68}}

\put(72,108){\line(-5,3){66}}
\put(75,108){\line(1,3){12}}
\put(80,108){\line(5,2){100}}

\put(218,108){\line(-3,1){118}}
\put(240,108){\line(-2,3){25}}

\put(10,158){\line(0,1){90}}
\put(15,158){\line(1,1){40}}

\put(90,158){\line(-3,4){30}}
\put(103,158){\line(3,2){60}}

\put(185,158){\line(-3,1){120}}
\put(200,158){\line(-2,1){172}}
\put(205,158){\line(-1,1){40}}
\put(210,158){\line(3,2){60}}

\put(55,208){\line(1,5){17.5}}
\put(60,208){\line(1,1){90}}
\put(80,206){\line(5,2){170}}

\put(160,208){\line(0,1){90}}
\put(187,208){\line(1,1){65}}

\put(250,208){\line(-2,1){175}}
\put(255,208){\line(-1,1){90}}

\put(45,258){\line(2,3){26}}

\put(75,308){\line(2,1){75}}

\put(160,308){\line(0,1){40}}

\put(265,283){\line(-3,2){98}}

\end{picture}
\end{center}
\caption{The partitions of $7$ items.}
\label{pokerDragon}
\end{figure}
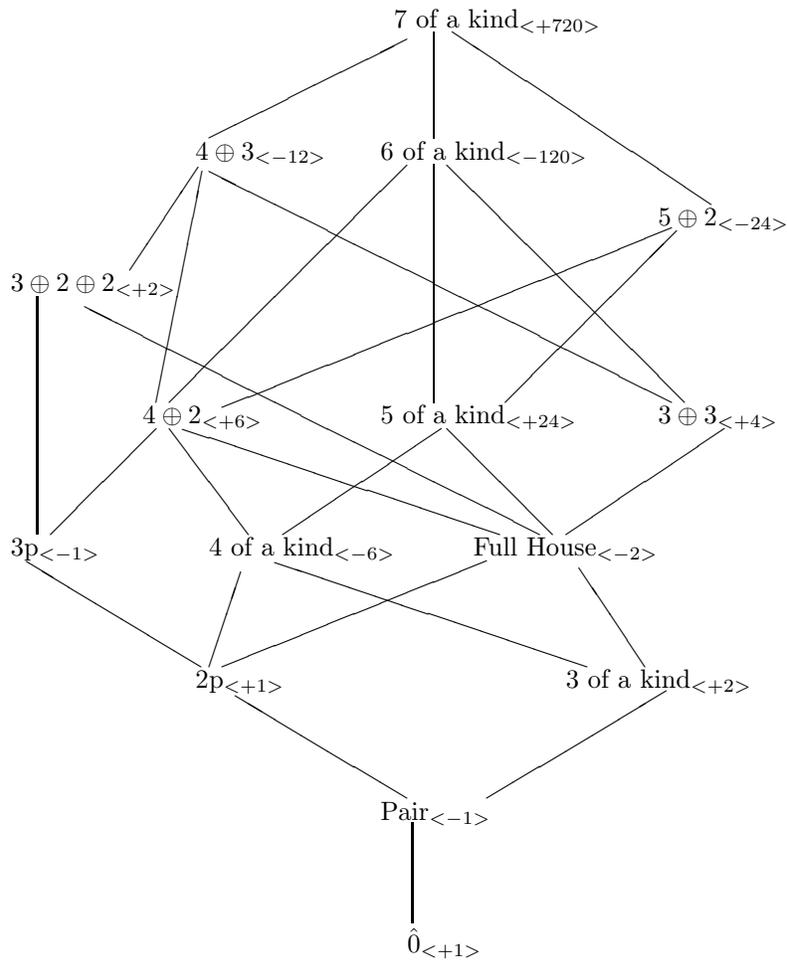

This figure uses the following notations (borrowed from poker) for types of partitions:\\
$\bullet$ Pair: the type $(2,1,1,\ldots,1)$.\\
$\bullet$ $2$p, $3$p : two and three pairs respectively i.e. the types $(2,2,1,1,1)$ and $(2,2,2,1)$.\\
$\bullet$ $n$ of a kind: the type $(n,1,1,\ldots,1)$.\\
$\bullet$ Full House:  $(3,2,1,1)$.\\
$\bullet$ $n_1\pl n_2\pl\ldots\pl n_k$ : the type $(n_1,n_2,\ldots,n_k,1,1\ldots,1)$.

By the Corollary to the Proposition $3$ section $7$ of \cite{Rota0} and the first Theorem from the section $5.2.1$ of \cite{Rota}, if $x$ is a subdivision of type $(\lambda_1,\lambda_2,\cdots,\lambda_n)$, then
$$\mu(\hat 0,x)=\prod_{i=1}^{n}(-1)^{\lambda_i-1}(\lambda_i-1)! $$
but we won't need this formula directly.

We are also going to use the following definition: let $s=(x_1,x_2,\cdots,x_n)$ be a sequence of $n$ nonzero residues modulo $p$ regarded as a function
$$s:\{1,2,\cdots,n\}\to\Fx. $$ 
Then its \it coimage \rm is the partition of $\{1,2,\cdots,n\}$ whose blocks are the nonempty preimages of elements of $\Fx$. Now we can prove the following proposition that puts together all the previous study.
\begin{prop}
\label{deltaInv}
The difference
$$A_0(n,p)-A_1(n,p) $$
does not depend on $p$ provided that $p$ is a prime number bigger than $n$.
\end{prop}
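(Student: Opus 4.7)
The plan is to combine M\"obius inversion on the partition lattice $\Pi_n$ with Proposition \ref{E}, using the coimage construction to reduce subset counting to sequence counting.

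For each partition $\pi$ of $\{1,\ldots,n\}$ with block sizes $\lambda_1,\ldots,\lambda_c$ and each residue $x\in\F$, let $f_x(\pi)$ denote the number of sequences $(x_1,\ldots,x_n)\in(\Fx)^n$ summing to $x$ that are constant on each block of $\pi$. Such sequences are precisely those whose coimage is $\geqslant\pi$, and they are parametrised by tuples $(y_1,\ldots,y_c)\in(\Fx)^c$ satisfying $\sum_i\lambda_iy_i=x$; therefore $f_x(\pi)=E^{\lambda_1,\ldots,\lambda_c}_x(c,p)$. Since $p>n\geqslant\lambda_i$, each $\lambda_i$ is a unit in $\F$, so Proposition \ref{E} applies and gives
\begin{equation*}
f_0(\pi)-f_1(\pi)=(-1)^{c(\pi)},
\end{equation*}
where $c(\pi)$ is the number of blocks of $\pi$. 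The essential point is that this difference no longer depends on $p$.

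Next let $g_x(\pi)$ count sequences summing to $x$ whose coimage equals $\pi$. Since $f_x(\pi)=\sum_{\tau\geqslant\pi}g_x(\tau)$, M\"obius inversion on $\Pi_n$ yields
\begin{equation*}
g_x(\hat 0)=\sum_{\tau\in\Pi_n}\mu(\hat 0,\tau)\,f_x(\tau).
\end{equation*}
Sequences of coimage exactly $\hat 0$ are sequences of pairwise distinct elements of $\Fx$, and each $n$-subset of $\Fx$ summing to $x$ yields exactly $n!$ such sequences, so $g_x(\hat 0)=n!\,A_x(n,p)$. Subtracting the identity for $x=1$ from the one for $x=0$ gives
\begin{equation*}
n!\bigl(A_0(n,p)-A_1(n,p)\bigr)=\sum_{\tau\in\Pi_n}\mu(\hat 0,\tau)(-1)^{c(\tau)},
\end{equation*}
and the right-hand side is a purely combinatorial quantity depending only on $n$. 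Dividing by $n!$ proves that $A_0(n,p)-A_1(n,p)$ is independent of $p$ whenever $p>n$ is prime. As a bonus, this formula is exactly the rigorous version of the informal ``poker combinations'' computation performed for $n\leqslant 4$ in the preceding discussion, once one reads off $\mu(\hat 0,\pi)$ from the type of $\pi$ (as displayed in Figure \ref{pokerDragon} for $n=7$).

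The main obstacle I foresee is purely bookkeeping: fixing conventions on the direction of the partial order so that ``coimage $\geqslant\pi$'' really means ``constant on blocks of $\pi$'', and checking that the hypothesis $p>n$ is precisely what allows Proposition \ref{E} to be invoked uniformly over all partitions, since the block sizes $\lambda_i$ then remain nonzero modulo $p$. Everything else is a direct application of standard M\"obius inversion on $\Pi_n$.
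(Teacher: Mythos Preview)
Your argument is correct and is essentially the same as the paper's: both define sequence counts stratified by coimage, observe via Proposition~\ref{E} that the ``sum over coimage $\geqslant\pi$'' difference equals $(-1)^{c(\pi)}$, and then apply M\"obius inversion on $\Pi_n$ to extract the $\hat 0$ term, obtaining $n!\bigl(A_0(n,p)-A_1(n,p)\bigr)=\sum_{\tau}\mu(\hat 0,\tau)(-1)^{c(\tau)}$. Your write-up is in fact slightly more explicit than the paper's in two places: you spell out why ``constant on the blocks of $\pi$'' matches coimage $\geqslant\pi$ under the reverse-refinement order, and you note that $p>n$ is exactly what guarantees the block sizes $\lambda_i$ lie in $\Fx$ so that Proposition~\ref{E} applies.
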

\begin{proof}
We are going to describe an algorithm that computes this difference (which is the one applied earlier for small values of the argument). For each subdivison $x$ of the set $\{1,2,\cdots,n\}$, denote by $r_0(x,p)$ the number of sequences $(x_1,x_2,\ldots,x_n)$ of elements of $\Fx$  of coimage $x$ that sum up to $0$, denote by $r_1(x,p)$ the number of those sequences of coimage $x$ that sum up to $1$ and denote $r(x,p)=r_0(x,p)-r_1(x,p)$. Then,
$$n!(A_0(n,p)-A_1(n,p))=r(\hat 0,p). $$

Denote, for each subdivision $y$ of $\{1,2,\cdots,n\}$, 
$$s(y,p)=\sum_{x\geqslant y} r(x,p). $$

Then, by Proposition \ref{E},
$$s(y,p)=(-1)^{c(y)} $$
where $c(y)$ is the number of blocks in the subdivision $y$. By the M\"obius invesion formula (see \cite{Rota}),

$$r(\hat 0,p)=\sum_{y\in\Pi_n} \mu(\hat 0,y)s(y,p)=\sum_{y\in\Pi_n}(-1)^{c(y)}\mu(\hat 0,y) . $$

If we compute this sum, we get the value of $A_0(n,p)-A_1(n,p)$ independently from $p$.
\end{proof}

The last move  can seem artificial\footnote{A purely compbinatorial and more general proof exists: see the Chapter $3$ of \cite{EnumCombin}, formula $(31)$ } but it is sufficient to complete the proof of Lemma \ref{delta1d}. Remark that $A_0(n,p)=A_0(n,p-1-n)$ since saying that the sum of some subset of $\Fx$ is $0$ is equivalent to saying that the sum of its complement is $0$. For the same kind of reason, $A_1(n,p)=A_{-1}(n,p-1-n)=A_1(n,p-1-n)$. 

Now we can prove Lemma \ref{delta1d} by induction on $n$. It has already been proved for small values of $n$. If $n>4$, by Bertrand's postulate, there is a prime number $p'$ such that $n<p'<2n$. Replace $p$ by $p'$ (by the proposition \ref{deltaInv} this leads to an equivalent statement) then $n$ by $p'-1-n$ (using the above remark). As $p'-1-n<n$, the step of induction is done.

The two following consequences of the lemma \ref{delta1d} have, in fact,  a much simpler proof (see, for example \cite{NumTh}). 

\begin{corollary}
Let $p$ be prime. Then, in the polynomial ring $\Z[T]/(T^p-1)$, 
$$\prod_{j\in\Fx}(1-T^j)=(p-1)-\sum_{i\in\Fx}T^i. $$
\end{corollary}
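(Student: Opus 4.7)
The plan is to expand the product on the left-hand side as a sum over subsets and read off the coefficients of each $T^i$ in the quotient ring $\Z[T]/(T^p-1)$. Expanding, we obtain
\[\prod_{j\in\Fx}(1-T^j)=\sum_{S\subseteq\Fx}(-1)^{|S|}T^{\sum_{j\in S}j},\]
where the exponent is understood modulo $p$. Collecting terms by the residue class of the exponent, the coefficient of $T^i$ is
\[c_i:=\sum_{n=0}^{p-1}(-1)^n A_i(n,p),\]
where $A_i(n,p)$ is the number of $n$-element subsets of $\Fx$ whose sum equals $i$ modulo $p$.

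Next I would use two independent pieces of information to pin down the $c_i$. First, the transitive action of $\Fx$ by multiplication on itself (already invoked in the remark preceding Lemma~\ref{delta1d}) gives a bijection between $n$-element subsets summing to $1$ and those summing to any nonzero $i$, so $c_1=c_2=\cdots=c_{p-1}$. Second, Lemma~\ref{delta1d} says $A_0(n,p)-A_1(n,p)=(-1)^n$ for $0\leqslant n<p$, hence
\[c_0-c_1=\sum_{n=0}^{p-1}(-1)^n(A_0(n,p)-A_1(n,p))=\sum_{n=0}^{p-1}1=p.\]

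It remains to produce one more linear relation between $c_0$ and $c_1$. For this, specialise $T=1$: each factor $1-T^j$ vanishes at $T=1$, so the polynomial $\sum_i c_i T^i$ is divisible (as an element of $\Z[T]$) by $1-T$, yielding $\sum_{i=0}^{p-1}c_i=0$, i.e.\ $c_0+(p-1)c_1=0$. Solving the $2\times 2$ linear system with $c_0-c_1=p$ gives $c_0=p-1$ and $c_1=-1$, which is exactly the claimed identity.

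The only delicate point, which I would address carefully, is the passage to $\Z[T]/(T^p-1)$: the evaluation $T=1$ and the coefficient extraction both make sense because $T-1$ divides $T^p-1$ and $1,T,\dots,T^{p-1}$ form a $\Z$-basis of the quotient. Everything else is bookkeeping once Lemma~\ref{delta1d} is in hand.
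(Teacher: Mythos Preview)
Your proof is correct and follows essentially the same route as the paper's: expand the product, identify the coefficients as $c_i=\sum_n(-1)^nA_i(n,p)$, use the $\Fx$-action to get $c_1=\cdots=c_{p-1}$, use Lemma~\ref{delta1d} to get $c_0-c_1=p$, and use $\sum_i c_i=0$ to finish. The only cosmetic difference is that you justify the relation $\sum_i c_i=0$ by evaluating at $T=1$ (noting that $T-1$ divides both the original product and $T^p-1$), whereas the paper simply states it.
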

\begin{proof}
By expanding the product we get:
$$\prod_{j\in\Fx}(1-T^j)=C_0+C_1T+C_2T^2+\ldots+C_{p-1}T^{p-1} $$
where
$$C_i=\sum_{n=0}^{p-1}(-1)^nA_i(n,p).$$
By the properties of the numbers $A_i(n,p)$ , $C_1=C_2=\ldots=C_{p-1}$ and (lemma \ref{delta1d}) $C_0-C_1=p$. As the sum of all the $C_i$ is $0$, we get necessarily
$$C_0=p-1\text{ and }C_1=-1. $$
\end{proof}

\begin{corollary}
Let $p$ be a prime number and $\zeta$ be a primitive $p$-th root of unity. Then,
$$\prod_{j\in\Fx}(1-\zeta^j)=p. $$
\end{corollary}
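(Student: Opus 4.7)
The plan is to deduce this corollary directly from the previous one by evaluating at $T=\zeta$. Since $\zeta^p = 1$, the ring homomorphism $\mathbb{Z}[T]\to \mathbb{Z}[\zeta]$ sending $T\mapsto \zeta$ factors through the quotient $\mathbb{Z}[T]/(T^p-1)$, so any identity in that quotient ring remains valid after the substitution $T\mapsto\zeta$.

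Applying this to the identity of the previous corollary yields
\[
\prod_{j\in\Fx}(1-\zeta^j) = (p-1) - \sum_{i\in\Fx}\zeta^i.
\]
The remaining ingredient is the classical identity $\sum_{i=0}^{p-1}\zeta^i = 0$, which holds because $\zeta$ is a primitive $p$-th root of unity and hence a root of $\frac{T^p-1}{T-1} = 1+T+\cdots+T^{p-1}$. Extracting the $i=0$ term gives $\sum_{i\in\Fx}\zeta^i = -1$.

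Substituting back produces $\prod_{j\in\Fx}(1-\zeta^j) = (p-1) - (-1) = p$, which is the claim. There is no real obstacle here; the entire content of the corollary has already been packaged into the polynomial identity established previously, and all that remains is to specialize the indeterminate to $\zeta$ and simplify the known sum of nontrivial $p$-th roots of unity.
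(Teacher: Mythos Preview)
Your proof is correct and follows exactly the approach the paper intends: the paper's own proof consists of the single sentence ``This is trivial from the previous result,'' and you have simply spelled out that triviality by substituting $T=\zeta$ and using $\sum_{i\in\Fx}\zeta^i=-1$.
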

\begin{proof}
This is trivial from the previous result.
\end{proof}


\section*{4. Some lemmas about Lucas numbers.}
\refstepcounter{section}
The Lucas numbers have the same formula as Fibonacci numbers but different starting terms. Their definition is as follows:
\begin{definition}
The sequence $L_n$ of Lucas numbers is defined by:
\begin{align}
L_0&=2,\notag\\
L_1&=1,\notag\\
\intertext{and for every $n\in\N$}
L_{n+2}&=L_{n}+L_{n+1}.\label{defLucas}
\end{align}
This is the sequence $A000032$ in OEIS.
\end{definition}

To make clear the problem of offset, let us define the version of Fibonacci sequence that will be used.
\begin{definition}
The sequence of Fibonacci numbers is defined by
\begin{align}
F_0&=0,\notag\\
F_1&=1,\notag\\
\intertext{and for every $n\in\N$}
F_{n+2}&=F_{n}+F_{n+1}.\label{defFibo}
\end{align}
\end{definition}

This chapter collects some properies of Lucas numbers which are analogous to properties of Fibonacci numbers but are difficult to find in literature. The first one draws the relation between both sequences.
\begin{prop}
$$\forall n\geqslant 1\quad L_{n}=F_{n-1}+F_{n+1}. $$
\end{prop}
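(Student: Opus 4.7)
The plan is to prove this by strong induction on $n$, leveraging the fact that both $(L_n)$ and $(F_n)$ satisfy the same linear recurrence $u_{n+2}=u_{n+1}+u_n$. The idea is that if we define the auxiliary sequence $G_n=F_{n-1}+F_{n+1}$ for $n\geqslant 1$, then $G_n$ inherits the Fibonacci recurrence from $(F_n)$, so it suffices to check that $G_n$ agrees with $L_n$ on two consecutive indices.

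First I would verify the two base cases. For $n=1$, we have $L_1=1$ and $F_0+F_2=0+1=1$. For $n=2$, using \eqref{defLucas} we compute $L_2=L_0+L_1=2+1=3$, while from \eqref{defFibo} we get $F_1+F_3=1+2=3$. So the identity holds for $n=1$ and $n=2$.

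For the inductive step, suppose the formula is valid at indices $n$ and $n-1$ (for some $n\geqslant 2$). Then applying \eqref{defLucas} and rearranging with \eqref{defFibo}, we obtain
\[
L_{n+1}=L_n+L_{n-1}=(F_{n-1}+F_{n+1})+(F_{n-2}+F_n)=(F_{n-2}+F_{n-1})+(F_n+F_{n+1})=F_n+F_{n+2},
\]
which is exactly the claimed identity at index $n+1$. This closes the induction.

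There is no substantive obstacle here: the argument is a routine two-step induction, and the only delicate point is that the statement starts at $n=1$ (so the formula $F_{n-1}+F_{n+1}$ makes sense, using $F_0=0$) rather than at $n=0$, where one would need to allow $F_{-1}=1$ to have $L_0=F_{-1}+F_1=2$. Since the paper restricts to $n\geqslant 1$, this subtlety is avoided.
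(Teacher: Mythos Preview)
Your proof is correct and follows essentially the same approach as the paper: verify the identity for $n=1$ and $n=2$, then invoke the shared recurrence \eqref{defLucas} and \eqref{defFibo} to propagate it to all larger $n$. The paper merely states this in one line (``for bigger values of $n$, it follows from the inductive definitions of Lucas and Fibonacci numbers''), while you spell out the inductive step explicitly.
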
 
\begin{proof}
For $n$ equal $1$ and $2$ this formula is checked directly. For bigger values of $n$, it follows from indutive definitions of Lucas and Fibonacci numbers.
\end{proof}

Let us recall the combinatorial definition of Fibonacci numbers: if $n\geqslant2$, $F_n$ is the number of ways to put disjoint "dominos" (i.e., subsets of the form $\{k,k+1\}$) on the integer interval $[1,n-1]$. The Lucas numbers have a similar characterization:
\begin{prop}
If $n\geqslant3$, $L_n$ is the number of ways to put disjoint "dominos" (i.e., subsets of the form $\{k,k+1\}$) on the circle $\Z/n\Z$.
\end{prop}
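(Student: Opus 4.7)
The plan is to give a combinatorial proof by splitting the count according to whether or not the distinguished edge $\{n,1\}$ of the cycle $\Z/n\Z$ carries a domino, and then reducing each case to the known combinatorial interpretation of Fibonacci numbers recalled just above the statement.

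Let $C_n$ denote the number of configurations of disjoint dominoes on $\Z/n\Z$. First I would partition the set of configurations into two classes: those in which the pair $\{n,1\}$ is \emph{not} a domino of the configuration, and those in which it is. In the first class the cyclic constraint is inactive, so the configuration is exactly a placement of disjoint dominoes on the linear interval $\{1,2,\dots,n\}$; by the combinatorial definition of the Fibonacci numbers recalled before the proposition (dominoes on $[1,m-1]$ are counted by $F_m$), the number of such configurations is $F_{n+1}$. In the second class, once the domino $\{n,1\}$ is placed, neither $1$ nor $n$ can lie in any other domino, and the remaining dominoes form an arbitrary matching on the linear interval $\{2,3,\dots,n-1\}$, giving $F_{n-1}$ configurations.

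Adding the two cases,
\[
C_n = F_{n+1}+F_{n-1},
\]
and applying the previous proposition (with $n$ replaced by $n$, which needs $n\geqslant 1$, hence is valid since we assume $n\geqslant 3$) gives $C_n = L_n$, as required.

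The only real care needed is in the bookkeeping of indices, namely matching the convention ``$F_m$ counts matchings on $[1,m-1]$'' to the lengths $n$ and $n-2$ of the two residual paths; a quick sanity check with $n=3$ (giving $F_4+F_2 = 3+1 = 4 = L_3$) confirms the offsets. I do not foresee any genuine obstacle beyond this indexing, and the hypothesis $n\geqslant 3$ is exactly what is needed so that the edge $\{n,1\}$ is distinct from the edge $\{1,2\}$ and the case split is disjoint.
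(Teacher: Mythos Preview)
Your proof is correct and follows essentially the same approach as the paper: both break the cycle into paths by a case split at a fixed location and reduce to the combinatorial Fibonacci interpretation, then invoke the previous proposition $L_n=F_{n-1}+F_{n+1}$. The only cosmetic difference is that the paper splits on a fixed \emph{vertex} (three cases: the vertex $0$ is uncovered, or covered by the domino $0\Minus 1$, or by $(n{-}1)\Minus 0$, yielding $F_n+F_{n-1}+F_{n-1}$), whereas you split on a fixed \emph{edge} (two cases, yielding $F_{n+1}+F_{n-1}$ directly).
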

\begin{proof}
Distinguish three types of placements of dominos following what happens to a fixed element of $\Z/n\Z$, say to $0$. 

If  in no domino is located at $0$, these placements are equivalent to the placements of dominos on the interval $[1,n-1]$ and we count $F_n$ of these.

If there is a domino $'0 \Minus 1'$, then the remaining have to be placed on the interbval $[2,n-1]$ and we count $F_{n-1}$ ways of doing it.

If there is a domino $'(p-1) \Minus 0'$, the count is the same with  the interval $[1,p-2]$.

The sum of these  three numbers is $F_n+F_{n-1}+F_{n-1}=F_{n+1}+F_{n-1}=L_n$. 
\end{proof}

The last property interesting in the perspective of this article is purely of number-theoretical kind.
\begin{theorem}
\label{lucas14}
Let $n\geqslant3$ be an odd integer. Then all prime factors of $L_n$ are either $2$ or congruent to $1$ or $-1$ modulo $5$. 
\end{theorem}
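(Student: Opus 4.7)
The plan is to reduce this to a question about orders in $\mathbb{F}_{p^2}^\times$ via the Binet formula $L_n = \varphi^n + \psi^n$, where $\varphi = (1+\sqrt 5)/2$ and $\psi = (1-\sqrt 5)/2$ are the roots of $x^2-x-1$, satisfying $\varphi\psi = -1$. By quadratic reciprocity (using $5\equiv 1\pmod 4$), the condition $p\equiv\pm 1\pmod 5$ is equivalent to saying that $5$ is a quadratic residue mod $p$. So it suffices to show that any odd prime $p\neq 5$ dividing $L_n$ (with $n$ odd) satisfies $\left(\tfrac{5}{p}\right)=1$. The excluded cases are consistent with the statement: $p=2$ is explicitly allowed, and $p=5$ never occurs, since $L_n\bmod 5$ cycles as $2,1,3,4$ of period $4$ and in particular is never $0$.

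Suppose for contradiction that $5$ is a non-residue modulo an odd prime $p\neq 5$ with $p\mid L_n$. Then $\varphi,\psi\in\mathbb{F}_{p^2}\setminus\F$ and they are Frobenius conjugates: $\varphi^p=\psi$. The crucial identity is
\[\varphi^{p+1}=\varphi\cdot\varphi^p=\varphi\psi=-1.\]
In particular, if $e$ denotes the multiplicative order of $\varphi$ in $\mathbb{F}_{p^2}^\times$, then $e\mid 2(p+1)$ but $e\nmid p+1$. Since $2(p+1)$ and $p+1$ have the same odd part, this forces $v_2(e)=v_2(p+1)+1$. Because $p$ is odd, $p+1$ is even, so $v_2(e)\geqslant 2$.

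On the other hand, $p\mid L_n$ means $\varphi^n+\psi^n=0$ in $\mathbb{F}_{p^2}$. Using $\psi=-\varphi^{-1}$ and the hypothesis that $n$ is odd, this becomes $\varphi^n-\varphi^{-n}=0$, i.e.\ $\varphi^{2n}=1$. Thus $e\mid 2n$, and since $n$ is odd we get $v_2(e)\leqslant v_2(2n)=1$, contradicting $v_2(e)\geqslant 2$. This contradiction forces $5$ to be a QR mod $p$, completing the proof.

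The only delicate point is the $2$-adic bookkeeping in the second paragraph: the jump from $v_2(p+1)$ to $v_2(p+1)+1$ comes from the fact that $2(p+1)$ and $p+1$ differ only at the prime $2$. Then the oddness of $n$ is exploited exactly once, to cap $v_2(e)$ at $1$; everything else is mechanical from the Binet formula together with the Frobenius action that swaps $\varphi$ and $\psi$.
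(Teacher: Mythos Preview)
Your argument is correct. The reduction to showing that $5$ is a quadratic residue modulo any odd prime divisor $p\ne 5$ of $L_n$ is the same as in the paper, but the way you verify this is genuinely different.

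The paper proceeds by first establishing the elementary identity $L_{n+1}L_{n-1}=L_n^2+(-1)^{n+1}5$ via the Lucas recursion, which for odd $n$ rewrites as $L_nL_{n-2}=L_{n-1}^2-5$. Thus if $p\mid L_n$ one reads off immediately $L_{n-1}^2\equiv 5\pmod p$, exhibiting an explicit square root of $5$ and finishing via quadratic reciprocity. Your route instead passes through $\mathbb{F}_{p^2}$: assuming $5$ is a non-residue you use the Frobenius relation $\varphi^p=\psi$ to get $\varphi^{p+1}=-1$, forcing $v_2(\mathrm{ord}\,\varphi)\geqslant 2$, and then the hypothesis $p\mid L_n$ with $n$ odd forces $\varphi^{2n}=1$ and hence $v_2(\mathrm{ord}\,\varphi)\leqslant 1$. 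The paper's approach is shorter and more elementary (no field extensions, just a one-line recurrence identity), and it has the pleasant side effect of naming the square root of $5$ explicitly. Your approach, while heavier, is more structural and generalises cleanly to other Lucas sequences $U_n,V_n$ attached to $x^2-Px+Q$: the same order/Frobenius bookkeeping characterises the primes dividing $V_n$ for odd $n$ in terms of the Legendre symbol of the discriminant $P^2-4Q$.
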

Its proof follows the one that characterizes the sums of two squares. let's begin with the following.
\begin{prop}
For each $n\geqslant 1$, $L_{n+1}L_{n-1}=L_n^2+(-1)^{n+1}5 $.
\end{prop}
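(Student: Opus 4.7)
The plan is to proceed by induction on $n$, mirroring the familiar derivation of Cassini's identity $F_{n+1}F_{n-1}-F_n^2=(-1)^n$ for Fibonacci numbers but adapted to the Lucas setting, where the constant $5$ appears because of the different initial conditions $L_0=2$, $L_1=1$.

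For the base case $n=1$, I would check directly: $L_2 L_0 = 3\cdot 2 = 6$ while $L_1^2 + (-1)^2 \cdot 5 = 1 + 5 = 6$. Then for the inductive step, assuming $L_{n+1}L_{n-1} = L_n^2 + (-1)^{n+1}\cdot 5$, the goal is to show $L_{n+2}L_n = L_{n+1}^2 + (-1)^{n+2}\cdot 5$. The sign on the constant flips, so equivalently one must show
\[ L_{n+2}L_n + L_{n+1}L_{n-1} = L_{n+1}^2 + L_n^2. \]
This is a purely algebraic identity in the Lucas sequence, which I would verify by expanding each side using the recurrence \eqref{defLucas}. On the left, $L_{n+2}=L_{n+1}+L_n$ gives $L_{n+2}L_n = L_{n+1}L_n + L_n^2$. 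On the right, $L_{n+1}=L_n+L_{n-1}$ gives $L_{n+1}^2 = L_{n+1}L_n + L_{n+1}L_{n-1}$. Substituting these, both sides reduce to $L_{n+1}L_n + L_{n+1}L_{n-1} + L_n^2$, which closes the induction.

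There is no real obstacle: the proof is essentially bookkeeping with the recurrence, and the only thing one must be careful about is the sign of the error term, which flips at every step — this is exactly what the factor $(-1)^{n+1}$ records. An alternative route would be the Binet-style closed form $L_n = \phi^n + \psi^n$ with $\phi\psi=-1$ and $\phi^2+\psi^2=3$, from which $L_{n+1}L_{n-1}-L_n^2 = (\phi\psi)^{n-1}(\phi^2+\psi^2) - 2(\phi\psi)^n = 3(-1)^{n-1} + 2(-1)^{n-1} = 5(-1)^{n+1}$ drops out in one line, but since the rest of the chapter stays within the integers and uses only the defining recurrence, the inductive argument fits the style of the paper better.
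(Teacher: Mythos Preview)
Your proof is correct and follows essentially the same approach as the paper: induction on $n$ with the base case $n=1$ checked by hand, and the inductive step handled by expanding via the recurrence \eqref{defLucas}. The only cosmetic difference is direction --- the paper rewrites the statement for $n$ until it becomes the statement for $n-1$, whereas you pass from $n$ to $n+1$ by verifying the identity $L_{n+2}L_n + L_{n+1}L_{n-1} = L_{n+1}^2 + L_n^2$; both amount to the same two applications of $L_{k+1}=L_k+L_{k-1}$.
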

\begin{proof}
\it By induction on n.\rm\  For $n=1$, the statement is clear. For bigger $n$, it is equivalent to
$$
\begin{array}{cl}
L_{n-1}^2+L_nL_{n-1}=L_n^2+(-1)^{n+1}5 &\text{(by applying \eqref{defLucas} to $L_{n+1}$),}\\[2pt]
L_{n-1}^2=L_n(L_n-L_{n-1})+(-1)^{n+1}5, & \\[2pt]
L_{n-1}^2+(-1)^{n}5=L_nL_{n-2} & \text{(by applying it once more to $L_n-L_{n-1}$),}
\end{array}
$$
which is the proposition \thetheorem\ for $n-1$.
\end{proof}

\it Proof of Theorem \ref{lucas14}.\rm\ 
For Lucas numbers with odd index, the proposition  \thetheorem\ takes the form:
$$L_nL_{n-2}=L_{n-1}^2-5 \text{\rm\  (where $n$ is odd).}$$

If $p$ is an odd prime number congruent to $2$ or $3$ modulo $5$, by the quadratic reciprocity law the congruence
$$L^2-5\equiv 0 \text{\rm\  mod }p $$
has no integer solutions, which implies that $p$ cannot divide $L_n$.\hfill$\Box$

\section*{5. The particular case of the Thue-Morse sequence.}
\refstepcounter{section}
This chapter and the next one will  deal with the problem of study of the numbers 
\begin{equation}
\label{xiA}
\xi^{[a]}=\prod_{j\in a\Gamma}(\sum_{c=0}^{b-1}t_c\zeta^j)
\end{equation}
 where $\zeta$ is a primitive $p$-th root of unity ($p$ prime), $\Gamma$ is a subgroup of $\Fx$ and $t_c\in\{1,-1\}$. The case corresponding to the rarefied Thue-Morse sequence (that is, $b=2$, $t_0=1$ and $t_1=-1$) is the one that has been studied rather extensively in the article \cite{GKS}; in this chapter we are going to recall their results and prove refined versions of some of them.

The first result is: for any sequence $t_n$ if the subgroup $\Gamma$  is of even order, all the numbers $\xi^{[a]}$ are real positive. Indeed, in this case $-1\in\Gamma$ therefore the product \eqref{xiA} is composed of pairs of complex-conjugate terms. If  $\Gamma$ is of odd order and the sequence $t_n$ is the Thue-Morse sequence, then 
$$\prod_{j\in a\Gamma} (1-\zeta^j)=\prod_{j\in\frac12a\Gamma}(\zeta^{-j}-\zeta^j)=\prod_{j\in\frac12a\Gamma}\left(2i\sin\frac{2\pi i}p\right) $$
where $\frac12 $ corresponds to the inverse of $2$ in $\F$. Therefore, in this case the numbers $\xi^{[a]}$ are pure imaginary (this is a stronger version of the Proposition $3.3$  of\cite{GKS}).

Therefore, the numbers $\xi^{[1]}$ and $\xi^{[i]}$ are explicitly described by their product (equal to $p$) and their sum. Some values of the latter are given in the appendix A.

\section*{6. The case of the sequence "$++-$".}
\refstepcounter{section}
In this chapter  we are going to answer one question about the sequence $"++-"$ that is, the $3$-multiplicative sequence defined by
$$t_n=(-1)^{\text{\rm the number of '$2$' digits in the ternary expansion of $n$}}. $$
Its initial terms are:
$$11\uo\ 11\uo\ \uo\uo1\ 11\uo\ \ldots\quad\text{(where $\uo$ stands for $-1$);} $$
this can be seen as the "simplest" particular case of our subject after the Thue-Morse sequence. The main result of this section is the following one:
\begin{theorem}
\label{Lucas}
Let $p$ be an odd prime number and $\zeta$ a primitive $p$-th root of unity. Then,
$$ N_{\Q(\zeta)/\Q}(1+\zeta-\zeta^2)=L_p,$$
$L_p$ being the $p$-th Lucas number.
\end{theorem}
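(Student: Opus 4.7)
The plan is to factor $1+x-x^2$ over $\Q(\sqrt 5)$ and then invoke the standard evaluation of the cyclotomic polynomial. Let $\alpha=(1+\sqrt 5)/2$ and $\beta=(1-\sqrt 5)/2$ be the roots of $x^2-x-1$, so that $\alpha+\beta=1$, $\alpha\beta=-1$, and $\alpha^n+\beta^n=L_n$ for every $n\geqslant 0$ (Binet's formula, which follows from the characteristic equation of the recurrence \eqref{defLucas}). Then $1+x-x^2=-(x-\alpha)(x-\beta)$. Since the Galois conjugates of $1+\zeta-\zeta^2$ are the $1+\zeta^j-\zeta^{2j}$ for $j=1,\dots,p-1$, and since $p-1$ is even so the global sign $(-1)^{p-1}$ disappears, I would write
\[ N_{\Q(\zeta)/\Q}(1+\zeta-\zeta^2)=\prod_{j=1}^{p-1}(1+\zeta^j-\zeta^{2j})=\prod_{j=1}^{p-1}(\alpha-\zeta^j)\prod_{j=1}^{p-1}(\beta-\zeta^j). \]

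Next I would use the identity $\prod_{j=1}^{p-1}(x-\zeta^j)=(x^p-1)/(x-1)$, which comes from $x^p-1=\prod_{j=0}^{p-1}(x-\zeta^j)$. This is the same identity used in the two corollaries at the end of Section $3$, there specialised to $x=1$; here I apply it at $x=\alpha$ and $x=\beta$ and multiply to obtain
\[ N_{\Q(\zeta)/\Q}(1+\zeta-\zeta^2)=\frac{(\alpha^p-1)(\beta^p-1)}{(\alpha-1)(\beta-1)}. \]
The denominator evaluates to $(\alpha-1)(\beta-1)=\alpha\beta-(\alpha+\beta)+1=-1-1+1=-1$. For the numerator, I expand
\[ (\alpha^p-1)(\beta^p-1)=(\alpha\beta)^p-(\alpha^p+\beta^p)+1=(-1)^p-L_p+1=-L_p, \]
using that $p$ is odd and Binet's formula. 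Taking the quotient yields $L_p$, as claimed.

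The argument is short, and its only delicate point is the sign bookkeeping: the leading minus in $-(x-\alpha)(x-\beta)$, the parity of $p-1$ in collapsing the product, and the $(-1)^p$ coming from $(\alpha\beta)^p$ all line up precisely because $p$ is odd. No use of the combinatorial machinery of Section $3$ is required for this statement, though the first corollary there provides an alternative route for cross-checking the analogous calculation with $(1-\zeta^j)$ in place of $(\alpha-\zeta^j)$.
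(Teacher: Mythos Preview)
Your proof is correct. The factorisation $1+x-x^2=-(x-\alpha)(x-\beta)$, the evaluation $\prod_{j=1}^{p-1}(x-\zeta^j)=(x^p-1)/(x-1)$, and the sign bookkeeping all check out, and Binet's formula closes the computation cleanly.

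This is, however, a completely different argument from the paper's. The paper expands $\prod_{j\in\Fx}(1+T^j-T^{2j})$ in $\Z[T]/(T^p-1)$ and computes the difference $C_0-C_1$ of the constant and linear coefficients. To do so it invokes the combinatorial machinery of Section~3: for each linear form $f$ with coefficients in $\{0,1,2\}$ it evaluates $A_0(f,p)-A_1(f,p)$ via the M\"obius inversion on the partition lattice (Lemma~\ref{delta1d} and its extensions), obtaining binomial expressions \eqref{SmallSum} and \eqref{BigSum}. The resulting double sum \eqref{binomials} is then identified term by term with counts of domino placements on the circle $\Z/p\Z$, recovering $L_p$ through the combinatorial interpretation of Lucas numbers.

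Your route is far shorter and uses only standard cyclotomic identities and Binet's formula; it would serve perfectly well as an alternative proof. The paper's route is longer but is the point of the article: it exercises the ``poker combinatorics'' of Section~3 on a nontrivial example and exhibits the link between the norm computation and domino tilings, which is not visible from the algebraic argument. Your approach also generalises immediately to any quadratic $a+bx+cx^2$ whose roots are known, whereas the paper's method is tailored to produce combinatorial information about the coefficient differences $C_0-C_i$ in $\Z[T]/(T^p-1)$.
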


The case $p=3$ is irrelevant to the study of the rarefied $"++-"$ sequences since the $3$-rarefied sequence $t_{3n}$ is identical to $t_n$. The first terms of the sequence $L_p$ where $p\geqslant5$ is prime, are given in the Appendix B.

\it Proof of the theorem \ref{Lucas}.\rm \\
We are going to adapt the methods from the Section $3$ to prove this result. Begin with some new notations.
\begin{definition}
If $f$ is a linear application from $\F^{p-1}$ to $\F$ of the form
\begin{equation}
\label{flin}
f(x_1,x_2,\ldots,x_{p-1})=f_1x_1+f_2x_2+\ldots+f_{p-1}x_{p-1} 
\end{equation}
where the coefficients $f_1,f_2,\ldots,f_{p-1}$ are $0$,$1$ or $2$, and $i$ is an element of $\F$, denote by $B_i(f,p)$ the number of sequences $(x_1,x_2,\ldots,x_{p-1})$ that are permutations of $\Fx$ and such that $f(x_1,x_2,\ldots,x_{p-1})=i$, and by $A_i(f,p)$ the same thing but permutations between places corresponding to the same coefficient $f_k$ inside the sequence $(x_1,x_2,\ldots,x_{p-1})$ (that is, permutations leading to the same expression in \eqref{flin}) are considered to be the same. 
\end{definition}

We are going to denote by $n_0(f)$ the number of coefficients of $f$ equal to $0$, by $n_1(f)$ the number of those equal to $1$ and $n_2(f)$ the number of those equal to $2$.

For example, if the coefficients of $f$ are only $0$s and $1$s 
, then
$$A_i(f,p)=A_i(n_1(f),p) \text{\rm\quad  and\quad } B_i(f,p)=n_0(f)!n_1(f)!A_i(n_1(f),p).$$

As before, the numbers $B_i(f,p)$ are the same for each $i\in\Fx$. If we calculate the norm of $(1+\zeta-\zeta^2)$ by expanding the product, we get the following: in $\Z[T]/(T^p-1)$,
$$\prod_{j\in\Fx}(1+T^j-T^{2j})=C_0+C_1T+C_2T^2+\ldots+C_{p-1}T^{p-1}$$ 
where
\begin{equation}
\label{xiPPM}
C_i=\sum_{f}(-1)^{n_2(f)}A_i(f,p) 
\end{equation}
the last sum running on all linear combinations satisfying the conditions in definition \arabic{definition}. And,
\begin{equation}
\label{c0c1}
N(1+\zeta-\zeta^2)=C_0-C_1. 
\end{equation}

The differences $A_0(f,p)-A_1(f,p)$ can be calculated in this new setting. First, in all cases, $B_i(f,p)=n_0(f)!n_1(f)!n_2(f)!A_i(f,p)$. If the sum of all coefficients of $f$ is smaller than $p$, then the algorithm of successive  readjustments of coefficients, described in the proposition $5$, together with the lemma $1$ proves that
\begin{align}
B_0(f,p)-B_1(f,p)&=(-1)^{n_1(f)+n_2(f)} (n_1(f)+n_2(f))!n_0(f)! \label{SSB} \\
\intertext{which implies}
A_0(f,p)-A_1(f,p)&=(-1)^{n_1(f)+n_2(f)} {n_1(f)+n_2(f)\choose n_1(f)}.\label{SmallSum} &&
\end{align}

If the sum of coefficients of $f$ is greater than (or equal to) $p$, formulae \eqref{SSB} and \eqref{SmallSum} no longer work because we can get $p$ on intermediate stages when adding coefficients of $f$. To help to this, we can replace $f$ by the linear combination $\mathbf{2}-f$ whose $k$-th coefficient is $(2-f_k)$. Both functions are related by the formula
$$(\mathbf{2}-f)(x_1,x_2,\ldots,x_{p-1})=-f(x_1,x_2,\ldots,x_{p-1}) $$
and the new linear combination's sum of coefficients is $2(p-1)-(2n_2(f)+n_1(f))\!<\!p$ if ${2n_2(f)+n_1(f)\geqslant p}$. Accordingly to this, the associated difference is
\begin{equation}
\label{BigSum}
A_0(f,p)-A_1(f,p)=A_0(\mathbf{2}-f,p)-A_1(\mathbf{2}-f,p)=(-1)^{n_0(f)+n_1(f)} {n_0(f)+n_1(f)\choose n_0(f)}. 
\end{equation}

If we put both formulas \eqref{SmallSum} and \eqref{BigSum} into \eqref{xiPPM} and \eqref{c0c1}, we get the following expression (the linear combinations in \eqref{xiPPM} are indexed first by $n_0(f)$ then by $n_2(f)$):
\begin{equation}
\label{binomials}
N(1+\zeta-\zeta^2)=\sum_{n_0=0}^{p-1}\left(\sum_{n_2=0}^{\min(p-1-n_0,n_0)}(-1)^{p-1-n_0-n_2}{p-1-n_0 \choose n_2} 
+\sum_{n_2=n_0+1}^{p-1-n_0}{p-1-n_2\choose n_0} \right).
\end{equation}

Now we are going to identify these numbers as numbers of ways to put dominos on the circle $\Z/p\Z$ with restrictions. The terms of the first sum that correspond to $n_0\!=\!\frac{p-1}{2},\ldots,p-2$ are zero, and the term corresponding to $n_0=p-1$ is one. Further, each term corresponding to $n_0=0,\ldots,\frac{p-3}{2}$ is equal to the number of putting $n_0+1$ dominos on the circle $\Z/p\Z$.

Indeed, call the first domino the first domino encountered when we walk on the circle in the direction $0,1,2,\ldots$  If no domino is located at $'(p-1)\Minus 0'$, suppose that the first domino is located at $'(n_2-n_0-1)\Minus (n_2-n_0)'$ leaving $p-1-n_2+n_0$ free sqares after it for the remaining $n_0$ dominos. The number of ways to put $n_0$ dominos on the line strip of length $p-1-n_2+n_0$ is exactly ${p-1-n_2+n_0-n_0\choose n_0}$ which is equal to a term of the last sum of \eqref{binomials}.
 
The second sum of \eqref{binomials} can be reduced to a more compact form by the following
\begin{prop}
For each $m,n\in\N$ such that $n\ge m$, 
$$\sum_{k=0}^m(-1)^{k+n}{n\choose k}=(-1)^{m+n}{n-1\choose m}. $$
\end{prop}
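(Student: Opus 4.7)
The plan is to prove the identity by induction on $m$, using Pascal's rule $\binom{n}{k}=\binom{n-1}{k}+\binom{n-1}{k-1}$, which will make the telescoping visible.

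For the base case $m=0$, the left-hand side is $(-1)^n\binom{n}{0}=(-1)^n$ and the right-hand side is $(-1)^n\binom{n-1}{0}=(-1)^n$, so they agree (the hypothesis $n\geqslant m$ is trivially satisfied since $n\geqslant 0$). For the inductive step, assuming the identity holds for $m-1$ whenever $n\geqslant m-1$, and given $n\geqslant m$, I would write
\[
\sum_{k=0}^m(-1)^{k+n}\binom{n}{k}=\sum_{k=0}^{m-1}(-1)^{k+n}\binom{n}{k}+(-1)^{m+n}\binom{n}{m}=(-1)^{m-1+n}\binom{n-1}{m-1}+(-1)^{m+n}\binom{n}{m},
\]
then factor $(-1)^{m+n}$ and apply Pascal's rule $\binom{n}{m}-\binom{n-1}{m-1}=\binom{n-1}{m}$ to conclude.

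Alternatively, and perhaps more transparently, one can avoid induction: apply Pascal's rule inside the sum once,
\[
\sum_{k=0}^m(-1)^k\binom{n}{k}=\sum_{k=0}^m(-1)^k\binom{n-1}{k}+\sum_{k=0}^m(-1)^k\binom{n-1}{k-1},
\]
then reindex the second sum via $j=k-1$ (observing that the $k=0$ term vanishes) to obtain $-\sum_{j=0}^{m-1}(-1)^j\binom{n-1}{j}$, so that all but one term telescopes away, leaving $(-1)^m\binom{n-1}{m}$; multiplying by $(-1)^n$ gives the claim.

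There is essentially no obstacle here — this is a classical identity, and both routes are a few lines. The only point to be mindful of is the edge case $n=m$: the right-hand side becomes $(-1)^{2n}\binom{n-1}{n}=0$, matching the well-known vanishing $\sum_{k=0}^n(-1)^k\binom{n}{k}=0$, so the range $n\geqslant m$ in the hypothesis is exactly what is needed for the formula to make sense with the standard convention $\binom{n-1}{m}=0$ when $m>n-1$ and $m<n$ is disallowed.
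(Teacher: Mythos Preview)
Your proof is correct and follows exactly the same route as the paper: induction on $m$, peeling off the $k=m$ term, invoking the induction hypothesis, and finishing with Pascal's rule $\binom{n}{m}-\binom{n-1}{m-1}=\binom{n-1}{m}$. Your alternative telescoping argument is a nice extra, but the inductive version already matches the paper's proof line for line.
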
 
\begin{proof}
\it By induction on m.\rm\  For $m=0$, the statement is trivial. To prove it for $m+1$, apply the induction hypothesis and get:
$$
\begin{array}{ll}
\displaystyle\sum_{k=0}^{m+1}(-1)^{k+n}{n\choose k}=&\displaystyle(-1)^{m+n}{n-1\choose m}+(-1)^{m+n+1}{n\choose m+1} \\
&\displaystyle=(-1)^{m+n+1}\left({n\choose m+1}-{n-1\choose m}\right) =(-1)^{m+n+1}{n-1\choose m-1}
\end{array}
$$
\end{proof}

By this proposition, if $n_0<\frac{p-1}{2}$, the second sum of \eqref{binomials} equals 
$$\sum_{n_2=0}^{\min(p-1-n_0,n_0)}(-1)^{n_0+n_2}{p-1-n_0 \choose n_2}={p-2-n_0 \choose n_0} $$
which is the first term of the third sum which, itself, equals the number of placements of $n_0+1$ dominos such that one domino is placed at $'0\Minus 1'$. We can identify this term to the number of placements of $n_0+1$ dominos with one domino placed at $'(p-1)\Minus 0'$ thus completing the description of all possible placements of dominos on the circle $\Z/p\Z$.\hfill$\Box$

\section*{7. Further questions.}
In the current state of things, we know much about the Thue-Morse sequence but  we only answered the first question about the sequence $"++-"$. It could be interesting, on one hand, to see whether the techniques developed here can be used to expand our knowledge about the rarefied Thue-Morse sequences, and therefore, about the class number of the quadratic fields, and, on the other hand, to prove for the sequence $"++-"$ analogous results to those we have already for the Thue-Morse sequence. Also, the sequence $"++-"$ has no reason to be unique in its kind: other multiplicative sequences can have the similar properties.

Furthermore, I think that Carthage must be destroyed\footnote{Cato the Elder, 234-149 BC}.

\newpage

\section*{Appendices.} 

\subsection*{A. Traces of $\xi^{[1]}$ corresponding to the subgroup of squares in $\Fx$ in the Thue-Morse case. }
The following table lists some values of $\Tr_{\Q(\sqrt{p})/\Q}N_{\Q(\zeta)/\Q(\sqrt{p})}(1-\zeta)$ where $p$ is a prime number congruent to $1$ modulo $4$ and $\zeta$ is a primitive $p$-th root of unity. These numbers are given in the form of product of numbers that are products of $2$ and $5$, and of pseudoprimes.

\begin{tabular}{|c|c|}
\hline
\(p\) & $\Tr N(1-\zeta)$ \\ 
\hline
$5$&$5$\\
$13$&$13$\\
$17$&$2\cdot17$\\
$29$&$29$\\
$37$&$2\cdot37$\\
$41$&$10\t41 $\\
$53$&$53 $\\
$61$&$5\t61 $\\
$73$&$250\t73 $\\
$89$&$2\t53\t89 $\\
$97$&$2\t97\t569 $\\
$101$&$2\t101 $\\
$109$&$25\t109 $\\
$113$&$2\t73\t113 $\\
$137$&$2\t137\t149 $\\
$149$&$5\t149 $\\
$157$&$17\t157 $\\
$173$&$173 $\\
$181$&$97\t181 $\\
$193$&$10\t109\t193\t233 $\\
$197$&$2\t197 $\\
$229$&$2\t173\t229 $\\
$233$&$2\t37\t41\t233 $ \\
$241$&$50\t241\t182969 $\\
$257$&$50\t41\t257 $\\
$269$&$10\t269 $\\
$277$&$157\t277 $\\
$281$&$10\t281\t12689 $\\
$293$&$293 $\\
$313$&$10\t17\t29\t313\t2909 $\\
$317$&$2\t317 $\\
$337$&$2\t337\t55335641 $\\
$349$&$2\t17\t29\t349 $\\
$353$&$2\t353\t3793 $ \\
$373$&$10\t53\t373 $\\
$389$&$10\t13\t389 $\\
$397$&$173\t397 $\\
\hline
\end{tabular}
\begin{tabular}{|c|c|}
\hline
\(p\) & $\Tr N(1-\zeta)$ \\ 
\hline
$401$&$2\t349\t401\t\t3749$\\
$569$&$10\t17\t569\t1427753$\\
$2909$&$5\cdot2909\t21084733$\\
$3793$&$2\t53\t3049\t3793\t9649\t96635049\t7180008526861$\\
$9649$&$10\t9649\t$\\
&$11038012868817426073312199225721320308683115045041507025035521109$\\[3pt]
$15313$&$10\t15313\t193674048965143013\t31697882584832654017\t$\\
&$1520919562609686936079944439228416723052033102529$\\[3pt]
$15329$&$50\t15329\t280450985814870077705875452193$\\[3pt]
$15349$&$10\t229\t653\t15349\t3083621\t41029529\t8411625925256297$\\
\hline
\end{tabular}

\subsection*{B. Lucas numbers of prime index.}

The following table lists Lucas numbers $L(p)$ in the form of product of pseudoprimes for  prime  numbers $p$ bigger than $3$. Note that, by the Theorem $1$ and the fact that $L(n)$ is even if and only if $3$ divides $n$,  all prime factors end by $1$ or $-1$. Note also that some of the last factorizations are long  to calculate on software like PARI.

\hfil
\begin{tabular}{|c|c|}
\hline
$p$ & $L(p)$ \\ 
\hline
$5$ & $11$\\
$7 $& $29$\\
$11 $&$ 199$\\
$13 $&$ 521$\\
$17 $&$ 3571$\\
$19 $&$ 9349$\\
$23 $&$ 139\cdot 461$\\
$29 $&$ 59\cdot 19489$\\
$31 $&$ 3010349$\\
$37 $&$ 54018521$\\
$41 $&$ 370248451$\\
$43 $&$ 6709\cdot 144481$\\
$47 $&$ 6643838879$\\
$53 $&$ 119218851371$\\
$59 $&$ 709\cdot 8969\cdot 336419$\\
$61 $&$ 5600748293801$\\
$67 $&$ 4021\cdot 24994118449$\\
$71 $&$ 688846502588399$\\
$73 $&$ 151549\cdot 11899937029$\\
$79 $&$ 32361122672259149$\\
$83 $&$ 35761381\cdot 6202401259$\\
$89 $&$ 179\cdot 22235502640988369$\\
$97 $&$ 3299\cdot 56678557502141579$\\
$101 $&$ 809\cdot 7879\cdot 201062946718741$\\
$103 $&$ 619\cdot 1031\cdot 5257480026438961$\\
$107 $&$ 47927441\cdot 479836483312919$\\
$109 $&$ 128621\cdot 788071\cdot 593985111211$\\
$113 $&$ 412670427844921037470771$\\
$127 $&$ 509\cdot 5081\cdot 487681\cdot 13822681\cdot 19954241$\\
$131 $&$ 1049\cdot 414988698461\cdot 5477332620091$\\
$137 $&$ 541721291\cdot 78982487870939058281$\\
$139 $&$ 30859\cdot 253279129\cdot 14331800109223159$\\
$149 $&$ 952111\cdot 4434539\cdot 3263039535803245519$\\
$151 $&$ 1511\cdot 109734721\cdot 217533000184835774779$\\
$157 $&$ 39980051\cdot 16188856575286517818849171$\\
$163 $&$ 1043201\cdot 6601501\cdot 1686454671192230445929$\\
$167 $&$ 766531\cdot 103849927693584542320127327909$\\
$173 $&$ 78889\cdot 6248069\cdot 16923049609\cdot 171246170261359$\\
$179 $&$ 359\cdot 1066737847220321\cdot 66932254279484647441$\\
$181 $&$ 97379\cdot 21373261504197751\cdot 32242356485644069$\\
$191 $&$ 22921\cdot 395586472506832921\cdot 910257559954057439$\\
$193 $&$ 303011\cdot 76225351\cdot 935527893146187207403151261$\\
$197 $&$ 31498587119111339\cdot 4701907222895068350249889$\\
\hline
\end{tabular}
\hfil

\hspace*{-3em} \begin{tabular}{|c|c|}
\hline
$p$ & $L(p)$ \\ 
\hline
$199 $&$ 2389\cdot 4503769\cdot 36036960414811969810787847118289$\\
$211 $&$ 33128448586319\cdot 3768695026320506495615952689771$\\
$223 $&$ 209621\cdot 191782505151874799799825102831271417475449$\\
$227 $&$ 39499\cdot 5098421\cdot 4311537234701\cdot 317351386961794678797301$\\
$229 $&$ 6871\cdot 104990418946773667410736999685208265866007631$\\
$233 $&$ 818757341\cdot 6911530261\cdot 873757179900549251563653697571$\\
$239 $&$ 479\cdot 7649\cdot 24216191671442408226762026802756956706931169$\\
$241 $&$ 1156801\cdot 4645999\cdot 43219877626484550971962471774087607599$\\
$251 $&$ 15061\cdot 170179\cdot 712841\cdot 15636705475517134545061743537722067281$\\
$257 $&$ 2107028233569599\cdot 125090447782502159\cdot 1945042261468790758531$\\
$263 $&$ 1579\cdot 924709\cdot 2098741\cdot 3001949101336686906107454320302466346629$\\
$269 $&$ 13451\cdot 49098524855733491\cdot 290341026883813109\cdot 860882346042166879$\\
$271 $&$ 59621\cdot 899179\cdot 92206663291\cdot 87426439096566323815478492553863521$\\
$277 $&$ 1109\cdot 5923369\cdot 1003666289\cdot 322458613167451\cdot 3647646099535497480264359$\\
$281 $&$ 20567460049\cdot 46415343154434259\cdot 55678135331080359350346681814561$\\
$283 $&$ 1699\cdot 252605941501\cdot 324238999448153864959724538289151678378314771$\\
$293 $&$ 287141\cdot 59605095029402530487010572214642235677583217188556211631$\\
$307 $&$ 1229\cdot 11739610117429203651282768407085324070169775523763828726810201$\\
$311 $&$ 34211\cdot 2890615644252691924572487628689034423952562309093965400390309$\\
$313 $&$ 258899611203303418721656157249445530046830073044201152332257717521$\\
$317 $&$ 4014648883841\cdot 15670596807846410359\cdot 28206477527834707033776102306507709$\\
$331 $&$ 526291\cdot 54184296181\cdot 4386848568249611\cdot 11957954590103942275063852978039182929$\\
$337 $&$ 21569\cdot 340819559\cdot 3651575156022459933890370204120436853816552382514920496951$\\
$347 $&$ 662771\cdot 25008386631867389\cdot 199187460399042526805980487374118125053459971811841$\\
$349 $&$ 81922033248592814089\cdot 15360894609285281651561\cdot 6868615650726652471695323898169$\\
$353 $&$ 59242995313457729780510823767354730798286848921481374874264534705573628371$\\
$359 $&$ 719\cdot 1648529\cdot 1517456267839\cdot 591045866085042506389105054361881124597940603179391791$\\
$367 $&$ 2298889\cdot 19997474011\cdot 28770822474564239\cdot 886000936153274021\cdot 42617146790676471298183229$\\
$373 $&$ 2239\cdot 400254035815230315691149177295200682735160941001088181098834904823746738439$\\
$379 $&$ 342912379\cdot 263849805823819\cdot 177736127099922813767360469853533109979011440647545873549$\\
$383 $&$ 7901291\cdot 359239103599\cdot 148183743669565231\cdot 262049896393105557884983790284397553466196701$\\
$389 $&$ 3904919893017807509\cdot 506500558078271291193254167726983153455094605128812878961431839$\\
$397 $&$ 138787200818838488796281\cdot 669489488461007623536076657264221325654746723636804695546641$\\
$401 $&$ 1775629\cdot 6238759\cdot 57490131422237119538947323064108369354763411739839193426140731363438441$\\
$409 $&$ 1427411\cdot 3165661\cdot 919046182779201475951\cdot 7204338025090306465053435447585274989066236336317481$\\
$419 $&$ 839\cdot 897499\cdot 316722762859\cdot 71770902070121337353161\cdot 214979797032908476941753114627650494795860641$\\
$421 $&$ 1995541\cdot 2971841379558728500308112219\cdot 1624464648249211515329083729905902386498198626045929519$\\
$431$& $188779 \cdot 203357221762049\cdot 1378049553112721035601841\cdot 223972569257357477422541897473184614027221489$\\
$433$& $12361187009\cdot 203421415129\cdot 110588112062920249\cdot 1115534320421853740681331360123414376301666570579820689$\\
$439$& $4391\cdot 826061911\cdot 1149884364774317448679684799063561\cdot 13345777072640252551741515140876418537140353109$\\
$443$& $2659\cdot  135559\cdot  1058468497584673231839683421107696696195973642200254474517777673097952198651301042859 $\\
$449$& $369079\cdot  72195777446499975249912346541\cdot  311680756181475991522861299701\cdot  824345650181758925250079965109$\\
$457$& $143392891\cdot  48175086409\cdot  864351271995241\cdot  53865562038701008975397146407705442118820462326130285905669299$\\
\hline
\end{tabular}

 



\subsection*{C. A source code to compute the Thue-Morse sequence.}
The following fragments of source codes are intended to calculate a value of the Thue-Morse sequence and they can be inserted into a C program. They use as input a variable \it n \rm of type \tt unsigned int \rm and a variable  \it thueMorse \rm of type \tt int\rm, and assign to \it thueMorse \rm the value
$$t_n=(-1)^\text{the number of $'1'$ digits in binary expansion of $n$}. $$

The following code is written in the AT\&T syntax and designed for commpilation by \tt gcc. 

\begin{verbatim}
asm("movl $1, %0;"
	"cmpl $0, %1;"
	"jnp _byte2;"
	"negl %0;"
"_byte2:;"
"	roll $8,%1;"
"	cmpl $0, %1;"
"	jnp _byte3;"
"	negl %0;"
"_byte3:;"
"	roll $8,%1;"
"	cmpl $0, %1;"
"	jnp _byte4;"
"	negl %0;"
"_byte4:;"
"	roll $8,%1;"
"	cmpl $0, %1;"
"	jnp _fini;"
"	negl %0;"
"_fini:;"
"roll $8,%1;"
:"=&r"(thueMorse)
:"r"(n)
	);
\end{verbatim}

\rm Users of Visual C++ will need the following version (written with INTEL syntax) instead:

\begin{verbatim}
__asm{push eax
	push ebx
	mov ebx, n

	mov eax, 1 
	cmp ebx, 0 
	jnp _byte2
	neg eax
_byte2:
	rol ebx,8
	cmp  ebx,0
	jnp _byte3
	neg eax
_byte3:
	rol ebx, 8
	cmp ebx, 0
	jnp _byte4
	neg eax
_byte4:
	rol ebx, 8
	cmp ebx, 0
	jnp _fini
	neg eax
_fini:

	mov thueMorse, eax
	pop ebx
	pop eax
	};
\end{verbatim}

These inline assembly codes outperform anything that can be written in C, which is why I suggest to use them as a fragment of code instead of function.

\end{document}